\newcommand{\longsquiggly}{\xymatrix{{}\ar@{~>}[r]&{}}}
\newtheorem{theorem}{Theorem}[section]
\newtheorem{lemma}[theorem]{Lemma}
\newtheorem{corollary}[theorem]{Corollary}
\newtheorem{proposition}[theorem]{Proposition}
\theoremstyle{definition}
\newtheorem{definition}[theorem]{Definition}
\theoremstyle{remark}
\newtheorem{remark}[theorem]{Remark}
\numberwithin{equation}{section}
\newtheorem{case}{Case}
\def\1{1\kern-.3em1}
\newcommand{\N}{{\mathbb N}}
\newcommand{\Z}{{\mathbb Z}}
\newcommand{\R}{{\mathbb R}}
\newcommand{\C}{{\mathbb C}}
\newcommand{\U}{{\rm U}}
\newcommand{\osp}{{\rm\mathfrak{osp}}}
\newcommand{\Sl}{{\rm\mathfrak{sl}}}
\newcommand{\UU}{{\mathfrak U}}
\newcommand{\g}{{\mathfrak g}}
\newcommand{\Uq}{{\rm{U}_q}}
\newcommand{\id}{{\rm{id}}}
\newcommand{\cT}{{\mathscr T}}
\newcommand{\cH}{{\mathscr H}}
\newcommand{\cG}{{\mathcal G}}
\newcommand{\cJ}{{\mathcal J}}
\newcommand{\so}{{\rm\mathfrak{so}}}
\newcommand{\ad}{{\mbox{Ad}}}
\newcommand{\mb}[1]{\mathbb{#1}}
\newcommand{\al}{\alpha}
\newcommand{\de}{\delta}
\newcommand{\De}{\Delta}
\newcommand{\ve}{\varepsilonup}
\newcommand{\lb}{\left(}
\newcommand{\rb}{\right)}
\newcommand{\td}{\tilde{\Delta}}
\newcommand{\ts}{\tilde{S}}
\newcommand{\cE}{{\mathcal E}}
\begin{document}
\normalfont
\sloppy

\title[Quantum correspondences]{Quantum correspondences of affine Lie superalgebras}
\author{Ying Xu}
\author{R. B. Zhang}
%\thanks{This research was supported by National Natural Science Foundation
%of China Grants No.11301130, No. 11431010,
%and Australian Research Council Discovery-Project Grant DP140103239.}
\address[Xu]{School of Mathematics, Hefei University of Technology, Anhui Province, 230009, China}
\address[Xu, Zhang]{School of Mathematics and Statistics,
University of Sydney, NSW 2006, Australia}
\email{yingxu@maths.usyd.edu.au, ruibin.zhang@sydney.edu.au}
%\date {Final draft}
\begin{abstract}
There is a surprising isomorphism between the quantised universal enveloping algebras of $\osp(1|2n)$ and $\so(2n+1)$.
This same isomorphism emerged in recent work of Mikhaylov and Witten
in the context of string theory as a $T$-duality composed with an S-duality.
We construct similar Hopf superalgebra isomorphisms for families of pairs of quantum affine superalgebras. An immediate consequence is that the representation categories of the quantum affine superalgebras in each pair are equivalent as strict tensor categories.
\end{abstract}
\subjclass[2010]{15A72,17B20}
\keywords{affine Lie superalgebras, quantum affine superalgebras, Hopf superalgebras, tensor categories}
\maketitle

%\tableofcontents

\section{Introduction}\label{sect:introduction}
In the early 80s, Rittenberg and Scheunert  \cite{RS} observed a remarkable connection between the orthogonal Lie algebra $\so(2n+1)$ and
the orthosymplectic Lie superalgebra $\osp(1|2n)$: there is a one-to-one
correspondence between the finite dimensional  representations of
$\osp(1|2n)$ and the tensorial representations of $\so(2n+1)$,
and the central characters of the two algebras
in the corresponding simple modules are the same.
This remained a mystery until
quantum supergroups \cite{BGZ, Y,  ZGB,Z93, Z98}
came to the scene.
It was discovered in \cite{Z3} that the quantised universal enveloping
algebras of these Lie (super)algebras are essentially isomorphic, and the
Rittenberg-Scheunert correspondence is a consequence of this isomorphism
in the semiclassical limits.
%Similar isomorphisms between some quantum affine algebras
%and quantum affine superalgebras were discovered in \cite{Z2}.

The isomorphism between the quantum (super)groups of $\osp(1|2n)$ and  $\so(2n+1)$
was used by Lanzmann to great effect \cite{LE} in the study of primitive ideals of $\U(\osp(1|2n))$.
By relating them to the primitive ideals of $\U(\so(2n+1))$ via the isomorphism, 
he drastically simplified the proofs of the results first obtained in \cite{GL}.

The same isomorphism emerged in very recent work of Mikhaylov and Witten \cite{MW} on quantum Chern-Simons theories.
The authors gave a description of Chern-Simons theories with super gauge groups
in terms of systems of D3-branes ending on 2-sides of an NS5-brane.
A T-duality composed with an S-duality of the brane systems interchanges
the corresponding quantum Chern-Simons theories
with gauge groups $\osp(1|2n)$ and $\so(2n+1)$ respectively.
The strong-weak coupling transformation procured by the T-duality corresponds
precisely to the interchange $q\leftrightarrow  -q$ in the quantum group context \cite{Z3}. Furthermore,  a similar duality between
quantum Chern-Simons theories
with gauge groups $\osp(2m+1|2n)$ and  $\osp(2n+1|2m)$ was constructed for arbitrary $m$ and $n$ in \cite{MW}.

The aim of the present paper is to give a catalogue of the isomorphisms analogous to
that between the quantum (super)groups of $\osp(1|2n)$ and  $\so(2n+1)$. The main  results are summarised in Theorem \ref{them:hopf-connect}.
To explain the content of the theorem, we need several conceptual constructs.

Let $\g$ be a Lie superalgebra or an affine Lie superalgebra in the classical series.
Fixing a fundamental system $\Pi$
of simple roots (Definition \ref {defi:quantised})  corresponding to an arbitrary choice of Borel subalgebra,
we let $\U_{q}(\g, \Pi)$ be the quantised universal enveloping superalgebra of $\g$ with respect to this fundamental system.
We note that in general, the quantised universal enveloping superalgebras $\U_{q}(\g, \Pi)$ associated with different fundamental systems $\Pi$
are not isomorphic as Hopf superalgebras.

Corresponding to each $\alpha_i$ in $\Pi$, we introduce a $\Z_2$ group generated by $\sigma_i$ such that $\sigma_i^2=1$, and let $G$ be the direct product of all
the $\Z_2$ groups, i.e.,  $\mathrm{G}:=\Z_2\times \dots\times \Z_2$ ($|\Pi|$ copies).
There is a natural action of  $\mathrm{G}$ on $\U_q(\g, \Pi)$
such that there exists an element $u\in \mathrm{G}$ which implements the  $\Z_2$-grading.
Let $\UU_{q}(\g, \Pi)=\U_{q}(\g, \Pi)\sharp\C\mathrm{G}$ be the smash product (Definition \ref {defi:smash product}), which
has a natural Hopf superalgebra structure by Proposition \ref{prop:hopf}.

The following result is a part of Theorem \ref{them:hopf-connect}.
\begin{theorem}\label{thm:main-quan}
Let $(\g, \g')$ be a pair of Lie superalgebras or affine Lie superalgebras listed in any column of Table \ref{tbl:q-correspond} (where $m+n>0$).

\vspace{-1mm}
\begin{table}[!hbp]
\renewcommand{\arraystretch}{1.2}
\caption{Quantum correspondences}
\label{tbl:q-correspond}
\begin{tabular}{c|c|c|c}
\hline
$\g$ & $\osp(2m+1|2n)$ & $\Sl(2m+1|2n)^{(2)}$  & $\osp(2m+2|2n)^{(2)}$ \\
\hline
$\g' $ & $\osp(2n+1|2m)$ & $\osp(2n+1|2m)^{(1)}$ & $\osp(2n+2|2m)^{(2)}$  \\
\hline
\end{tabular}
\end{table}

\vspace{-1mm}
\noindent
For any chosen fundamental system $\Pi$ of $\g$ and the corresponding  fundamental system $\Pi'=\phi(\Pi)$ of $\g'$ (see Lemma \ref{lem:connection phi}),
there exists an isomorphism of associative algebras
\begin{eqnarray*}%\label{eq:alg-iso}
\UU_{-q}(\g', \Pi')\stackrel{\cong}{\longrightarrow}\UU_{q}(\g, \Pi),
\end{eqnarray*}
which is defined explicitly in Theorem \ref{thm:iso-main}.
\end{theorem}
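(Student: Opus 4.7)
The plan is to prove the isomorphism by constructing an explicit map on Chevalley-type generators and then verifying both that it extends to a well-defined algebra homomorphism and that it admits a two-sided inverse. Both $\UU_q(\g, \Pi)$ and $\UU_{-q}(\g', \Pi')$ are smash products of a quantised enveloping superalgebra with the group algebra of $\mathrm{G} = \Z_2^{|\Pi|}$, and the conceptual heart of the construction is that one can use the group elements $\sigma_i$ to twist Chevalley generators and thereby flip their $\Z_2$-parity. Under the bijection $\phi$ of Lemma \ref{lem:connection phi}, a simple root of $\g$ that is odd will correspond to a simple root of $\g'$ that is even (and vice versa); the $\sigma_i$-twists absorb this parity mismatch at the level of elements, while the scalar substitution $q \mapsto -q$ compensates for the sign changes appearing in the (super) Cartan and Serre relations.

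Concretely, I would first fix Drinfeld--Jimbo style presentations of $\U_q(\g, \Pi)$ and $\U_{-q}(\g', \Pi')$ in terms of generators $e_i, f_i, k_i^{\pm 1}$ together with their quantum (super) Serre relations, and then extend these presentations to the smash products by adjoining the $\sigma_i$ with $\sigma_i^2=1$ and the sign-twist relations that encode the action of $\mathrm{G}$ on $\U_q$. With these in hand I would define a map
\begin{equation*}
\Psi : \UU_{-q}(\g', \Pi') \longrightarrow \UU_q(\g, \Pi)
\end{equation*}
on generators by rules of the form $e'_i \mapsto c_i\, e_{\phi^{-1}(i)}\, \sigma_{J_i}$, $f'_i \mapsto c'_i\, \sigma_{J_i}\, f_{\phi^{-1}(i)}$, $k'_i \mapsto k_{\phi^{-1}(i)}$, and $\sigma'_i \mapsto \sigma_{\phi^{-1}(i)}$, where the scalars $c_i, c'_i$ and the subsets $J_i \subseteq \Pi$ are chosen exactly so that the parity of each image matches the parity of the left-hand generator in $\UU_{-q}(\g',\Pi')$; Theorem \ref{thm:iso-main} is where these formulas will be pinned down.

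The verification then splits into three steps: the Cartan--Kac commutators (essentially tautological once $\phi$ is checked to preserve Cartan data up to renormalisation), the smash product relations (handled by computing the $\sigma_i$-action on $e_j, f_j$ on both sides), and the quantum Serre relations, which form the main obstacle. The computation here relies on tracking $q$-commutators through the $\sigma_{J_i}$-twists and observing that the accumulated signs are absorbed by the substitution $q \mapsto -q$ via identities such as $[n]_{-q} = (-1)^{n-1}[n]_q$; this cancellation is most delicate precisely at the nodes whose parity is reversed by $\phi$. For the twisted affine pairs in the second and third columns of Table \ref{tbl:q-correspond}, the Serre relations are more intricate owing to higher-degree brackets tied to the imaginary roots of the twisted Dynkin diagrams, so a careful case analysis at each affine node that changes parity will be required. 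Once these relations are verified, bijectivity of $\Psi$ is immediate because the construction is manifestly symmetric: swapping the roles of $(\g,\Pi)$ and $(\g',\Pi')$ and twisting in the opposite direction produces an explicit two-sided inverse.
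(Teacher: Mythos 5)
Your overall strategy coincides with the paper's: define the map on Chevalley-type generators by dressing $e_i,f_i$ with products of the group elements $\sigma_j$, verify the $\U_{-q}(\g',\Pi')$-presentation relation by relation (with the affine node and the higher-order Serre relations handled by case analysis), and get the inverse for free from the symmetry of the construction together with $\sigma_j^2=1$. However, two of your concrete choices would make the verification fail.

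First, the assignment $k'_i\mapsto k_{\phi^{-1}(i)}$ cannot work: the image of $k'_i$ must itself carry a group factor, namely $K_i=\sigma_i k_i$ as in \eqref{eq:connect B}. The substitution $q\mapsto -q$ in the Cartan relations is implemented precisely by the conjugation $\sigma_i e_j\sigma_i^{-1}=(-1)^{(\alpha_i,\alpha_j)}e_j$, so that $K_iE_jK_i^{-1}=(-q)^{(\alpha_i,\alpha_j)}E_j$; with $k_i$ alone one only gets $q^{(\alpha_i,\alpha_j)}E_j$, which is not the required $t$-relation. Likewise the relation $E_iF_i-(-1)^{\bar i\bar i}F_iE_i$ produces an expression of the form $-\sigma_i(k_i-k_i^{-1})/(q^{\theta_i}-q^{-\theta_i})$, which is recognisable as $(K_i-K_i^{-1})/(t^{\theta_i}-t^{-\theta_i})$ only because $K_i=\sigma_i k_i$ and $K_i^{-1}=\sigma_i k_i^{-1}$. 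Second, your guiding principle --- that the subsets $J_i$ can be chosen so that the parity of each image matches that of $e'_i$ --- is false and cannot be repaired: in the smash product the elements $\sigma_j$ are \emph{even} ($\UU_q(\g,\Pi)_{\bar 1}=\U_q(\g,\Pi)_{\bar 1}\otimes\C\mathrm{G}$), so multiplying $e_i$ by any $\sigma_J$ never changes its parity. Indeed the theorem asserts only an isomorphism of associative algebras, and the paper stresses that it does \emph{not} preserve the $\Z_2$-gradings (matching the gradings requires the separate picture-change construction of Section 4). Relatedly, $\phi$ does not exchange odd and even simple roots wholesale: isotropic odd roots stay odd, and only the short non-isotropic node changes parity. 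The actual mechanism is that conjugation by the $\sigma$-factors converts commutators into anticommutators (and vice versa) exactly where the parities of $\alpha_i$ and $\alpha'_i$ disagree; once you replace the parity-matching heuristic by this sign-conversion argument and correct the image of $k'_i$, your plan aligns with the paper's proof of Theorem \ref{thm:iso-main}.
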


For the special pairs $(\g, \g')$ with either $\g$ or $\g'$ being an ordinary affine Lie algebra,
Theorem \ref{thm:main-quan} was proved in \cite{Z3, Z2}.  It was quite a surprise that such isomorphisms exist as the root systems of any pair $\g$ and $\g'$ are very different.

As it stands, the isomorphism in Theorem \ref{thm:main-quan}
does not preserve the $\Z_2$-gradings, thus can not be a superalgebra isomorphism, let alone a Hopf superalgebra isomorphism.
However, it relates the Hopf superalgebra structures
at a more fundamental level.

As advocated by Majid  and others (see e.g., \cite[Chapter 10.1]{Ma} and \cite{AEG}), one should place Hopf superalgebras in the context of braided tensor categories.
The category of vector superspaces can be considered as the tensor category of representations of the group algebra of $\Z_2$ regarded as a  triangular Hopf algebra.
A Hopf superalgebra is then a Hopf algebra in this category.
Given a Hopf superalgebra, one may change the $\Z_2$-action to obtain a new Hopf superalgebra with the same underlying associative algebra structure, but
a new co-algebra structure and different $\Z_2$-grading.  We refer to this process as a {\em picture change} (see Definition \ref{def:PC}), which is also loosely known as {\em bosonisation} in the literature   \cite{Ma} (see Remark \ref{rem:bosonisation}).
An important fact is that the representation category of
the new Hopf superalgebra is
equivalent to that of the original Hopf superalgebra as strict tensor category (see  \cite[ Chapter 10.1]{Ma} and \cite[Theorem 3.1.1]{AEG}).

Corresponding to any pair $(\g, \g')$ of (affine) Lie superalgebras from Theorem \ref{thm:main-quan}, we have the Hopf superalgebras $\UU_q(\g, \Pi)$ and
$\UU_{-q}(\g', \Pi')$ with the standard Hopf structures.
Denote by $\Delta$, $\epsilon$ and $S$ the co-multiplication, co-unit and antipode of  $\UU_q(\g, \Pi)$ respectively.
We apply an appropriate picture change
to $(\UU_q(\g, \Pi), \Delta, \epsilon, S)$ to obtain a new Hopf superalgebra
$(\UU_q(\g, \Pi), \tilde\Delta, \epsilon, \tilde{S})$, where $\UU_q(\g, \Pi)$ has acquired
a new $\Z_2$-grading. Relative to this $\Z_2$-grading, the map of Theorem \ref{thm:main-quan} becomes an isomorphism of superalgebras; see
Corollary \ref{cor:alg-iso}.

However,  $\UU_{-q}(\g', \Pi')$ and $(\UU_q(\g, \Pi), \tilde\Delta, \epsilon, \tilde{S})$  as Hopf superalgebras are still different.
To relate them, we introduce another ingredient,
Drinfeld twists \cite{D2, R}, which is used for changing the co-algebraic structures. We construct a Drinfeld twist $\cJ$ for $(\UU_q(\g, \Pi), \tilde\Delta, \epsilon, \tilde{S})$ in Lemma \ref{lem:j-properties},  and use it to twist the Hopf superalgebra in the way described in Section \ref{sect:twisting}.
This gives rise to another Hopf superalgebra $(\UU_q(\g, \Pi),  \tilde\Delta^{\cJ}, \epsilon, \tilde{S}^{\cJ})$, see Lemma \ref{lem:tilde-Delta}.

\begin{theorem}\label{them:hopf-connect}
Let $(\g, \g')$ be a pair of (affine) Lie superalgebras  in Theorem \ref{thm:main-quan}. Then the quantum (affine) superalgebra $\UU_{-q}(\g',\Pi')$ with the standard Hopf superalgebra structure is isomorphic to $(\UU_q(\g, \Pi), \tilde{\Delta}^\cJ, \epsilon, \tilde{S}^\cJ)$.
\end{theorem}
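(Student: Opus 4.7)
The plan is to take the associative algebra isomorphism
\begin{equation*}
\Phi\colon \UU_{-q}(\g',\Pi') \stackrel{\cong}{\longrightarrow} \UU_q(\g,\Pi)
\end{equation*}
of Theorem \ref{thm:iso-main} and show it is a Hopf superalgebra isomorphism from $\UU_{-q}(\g',\Pi')$, with its standard structure, onto $(\UU_q(\g,\Pi), \tilde\Delta^\cJ, \epsilon, \tilde S^\cJ)$. By Corollary \ref{cor:alg-iso}, the picture change makes $\Phi$ into a homomorphism of $\Z_2$-graded algebras, so the underlying superalgebras already agree. Since $\epsilon\circ\Phi=\epsilon'$ is tautological on generators and the antipode in any Hopf superalgebra is uniquely determined by the product and coproduct via the Hopf axioms, the entire theorem reduces to verifying the single identity
\begin{equation*}
(\Phi\otimes\Phi)\circ \Delta' \;=\; \tilde\Delta^\cJ\circ \Phi,
\end{equation*}
where $\Delta'$ denotes the standard coproduct of $\UU_{-q}(\g',\Pi')$.

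Both sides are algebra homomorphisms, so the identity needs only be checked on a generating set. The natural choice is the Chevalley-type generators attached to $\Pi'=\phi(\Pi)$: the simple root vectors $E'_\alpha, F'_\alpha$ for $\alpha\in\Pi'$, the Cartan grouplikes $K'_\alpha$, and the $\Z_2$-group generators $\sigma'_i\in G'$. The standard coproduct takes the familiar form $\Delta'(E'_\alpha)=E'_\alpha\otimes 1+K'_\alpha\otimes E'_\alpha$, and analogously for $F'_\alpha$, with $-q$ in place of $q$ throughout. On the other side, Theorem \ref{thm:iso-main} gives $\Phi(E'_\alpha)$ explicitly as a root vector of $\UU_q(\g,\Pi)$ dressed by elements of $G$. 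I would compute $\tilde\Delta(\Phi(E'_\alpha))$, which differs from $\Delta(\Phi(E'_\alpha))$ by insertion of a grouplike $u\in G$ implementing the new $\Z_2$-grading, and then conjugate by the Cartan-diagonal twist $\cJ$ of Lemma \ref{lem:j-properties}, producing $\tilde\Delta^\cJ(\Phi(E'_\alpha))$. The claim is that this coincides with $(\Phi\otimes\Phi)\Delta'(E'_\alpha)$ on the nose. The case of $F'_\alpha$ is analogous, and for the Cartan and $\Z_2$-generators both sides are manifestly grouplike and the identity is essentially by construction of $\Phi$.

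The main obstacle is the bookkeeping of three interacting sources of sign and grouplike factors: the Rittenberg--Scheunert-type substitution inside $\Phi$, the picture change $\Delta\mapsto\tilde\Delta$, and conjugation by $\cJ$. Each contributes either an element of $G$ or a Cartan grouplike on one of the two tensor legs, and the key verification is that their combined effect on a simple root vector produces exactly the grouplike $\Phi(K'_\alpha)$ demanded by the standard coproduct of $\UU_{-q}(\g',\Pi')$, with no residual sign and with $q$ correctly replaced by $-q$. Because $\cJ$ lies in the abelian subalgebra generated by the Cartan grouplikes and $G$, this is an identity in a commutative group algebra and can be checked one simple root at a time from the explicit formulas of Theorem \ref{thm:iso-main}, separately for the three columns of Table \ref{tbl:q-correspond}. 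Once the coproduct identity holds on all generators, the antipode equality $\tilde S^\cJ\circ\Phi=\Phi\circ S'$ follows from the uniqueness of the antipode in a Hopf superalgebra, completing the proof.
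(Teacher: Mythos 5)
Your proposal is correct and follows essentially the same route as the paper: the paper likewise combines Corollary \ref{cor:alg-iso} (the picture change makes the map a superalgebra isomorphism) with the computation packaged as Lemma \ref{lem:tilde-Delta}, which verifies exactly your coproduct identity $(\Phi\otimes\Phi)\circ\Delta'=\tilde\Delta^\cJ\circ\Phi$ by showing $\tilde\Delta^\cJ(E_i)=E_i\otimes 1+K_i\otimes E_i$ and $\tilde\Delta^\cJ(F_i)=F_i\otimes K_i^{-1}+1\otimes F_i$ on the generators, with the antipode then following automatically. No substantive difference.
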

We comment that  even though
Theorem \ref{thm:main-quan} was partially known \cite{Z3, Z2} before, the Hopf superalgebra isomorphism of Theorem \ref{them:hopf-connect}
is new in all cases.
In general, the quantised universal enveloping superalgebras $\Uq(\g, \Pi)$  corresponding to different fundamental systems are non-isomorphic as Hopf superalgebras, thus the isomorphism of Theorem \ref{them:hopf-connect} depends on the fundamental systems nontrivially.

\begin{definition}\label{def:correspond} We call the Hopf superalgebra isomorphism
of Theorem \ref{them:hopf-connect}  a quantum correspondence between the (affine) Lie superalgebras $\g$ and $\g'$.
\end{definition}
The following result is a consequence of the quantum correspondence
and some general facts (see Theorem \ref{thm:PC}) on Hopf superalgebras.
\begin{theorem}\label{thm:tensor-equiv} Let  $(\g, \g')$ be
any pair  of (affine) Lie superalgebras in Theorem \ref{thm:main-quan}.
For any fundamental system $\Pi$ of $\g$ and the corresponding  fundamental system $\Pi'=\phi(\Pi)$ of $\g'$,  the representation categories of
the Hopf superalgebras $\UU_{q}(\g, \Pi)$ and $\UU_{-q}(\g', \Pi')$ are
equivalent as strict tensor categories.
\end{theorem}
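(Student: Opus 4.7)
The plan is to realise the claimed tensor equivalence as the composition of three equivalences, one for each structural transformation appearing in Theorem \ref{them:hopf-connect}. First I would use Theorem \ref{them:hopf-connect} directly: pull-back along any isomorphism of Hopf superalgebras is a strict tensor equivalence of representation categories, so the Hopf superalgebra isomorphism
\[
\UU_{-q}(\g',\Pi')\ \stackrel{\cong}{\longrightarrow}\ (\UU_q(\g,\Pi),\,\tilde\Delta^\cJ,\,\epsilon,\,\tilde S^\cJ)
\]
identifies the representation category of $\UU_{-q}(\g',\Pi')$ with that of $(\UU_q(\g,\Pi),\tilde\Delta^\cJ,\epsilon,\tilde S^\cJ)$ as strict tensor categories. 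The problem thus reduces to comparing the latter with the standard $\UU_q(\g,\Pi)$-module category.

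Next I would undo the Drinfeld twist $\cJ$. The Hopf superalgebras $(\UU_q(\g,\Pi),\tilde\Delta,\epsilon,\tilde S)$ and $(\UU_q(\g,\Pi),\tilde\Delta^\cJ,\epsilon,\tilde S^\cJ)$ have the same underlying associative superalgebra and the same $\Z_2$-grading, so they share the same underlying abelian category of representations; using the cocycle-type identities for $\cJ$ recorded in Lemma \ref{lem:j-properties}, multiplication by $\cJ$ on each $V\otimes W$ provides a natural even isomorphism intertwining the two module structures, and these assemble into a tensor equivalence which is the identity on objects and morphisms. Finally I would invoke Theorem \ref{thm:PC}, the picture-change statement that $(\UU_q(\g,\Pi),\Delta,\epsilon,S)$ and $(\UU_q(\g,\Pi),\tilde\Delta,\epsilon,\tilde S)$ have equivalent representation categories as strict tensor categories, in line with \cite[Chapter 10.1]{Ma} and \cite[Theorem 3.1.1]{AEG}. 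Composing the three equivalences yields the theorem.

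The content is routine once the two structural changes are understood separately; the substantive work has already been carried out upstream in constructing $\cJ$ and verifying the properties of $\tilde\Delta$ and $\tilde S$. The one point requiring care, rather than a genuine obstacle, is the word \emph{strict} in the conclusion: a generic Drinfeld twist yields only a non-strict tensor equivalence in which $\cJ$ supplies a non-trivial coherence isomorphism on tensor products, so one must explain why in the present Hopf-superalgebra setting the coherence data coming from $\cJ$ is compatible on the nose with the strict monoidal structure on module categories. This reduces to verifying the pentagon and unit axioms for the coherence data, and both follow from the identities of Lemma \ref{lem:j-properties} together with the fact that the underlying vector-space tensor products coincide.
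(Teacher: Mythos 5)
Your proposal is correct and follows essentially the same route as the paper, which proves the theorem by composing the Hopf superalgebra isomorphism of Theorem \ref{them:hopf-connect} with the tensor equivalences coming from the Drinfeld twist and from the picture change of Theorem \ref{thm:PC}; your additional remark that the cocycle identities of Lemma \ref{lem:j-properties} supply the coherence data for the twist equivalence is a correct (and welcome) elaboration of a step the paper leaves implicit.
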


The remainder of the paper is devoted to the proof of
Theorem \ref{them:hopf-connect}. All notions required, including those used in the discussion above,  will be carefully explained.

Results of the present paper have been applied to construct Drinfeld realisations \cite{XZ}, vertex operator representations, and finite dimensional representations
of classes of quantum affine superalgebras.

 \section{Quantised universal enveloping superalgebras}

\subsection{Root Systems}\label{sect:roots}

We begin with a description of the root data of the classical series of Lie superalgebras and the related twisted and untwisted affine Lie superalgebras.

For any given pair of nonnegative integers $k$ and $l$, we let $\cE(k|l)$ be the $(k+l)$-dimensional vector space over $\R$
with a basis consisting  of elements $\varepsilon_i$ ($i=1, 2, \dots, k$)
and $\delta_\nu$ ($\nu=1, 2, \dots, l$). We endow  $\cE(k|l)$ with a symmetric non-degenerate bi-linear form
\begin{equation}\label{eq:bilinear form}
\begin{aligned}
(\varepsilon_i, \varepsilon_j)=(-1)^{\theta}\delta_{i j}, \quad (\delta_\mu, \delta_\nu)=-(-1)^{\theta}\delta_{\mu \nu}, \quad
(\varepsilon_i, \delta_\mu)=(\delta_\mu, \varepsilon_i)=0,
\end{aligned}
\end{equation}
where $\theta$ is $0$ or $1$ which will be fixed in the following way.
Call an order of the basis elements \emph{admissible} if $\varepsilon_i$ appears before
$\varepsilon_{i+1}$ for all $i$,  and $\delta_\nu$ before $\delta_{\nu+1}$
for all $\nu$. Fix an admissible order and denote by $({\mathcal E}_1, {\mathcal E}_2, \dots, {\mathcal E}_{k+l})$ the ordered basis of $\cE(k|l)$. Then we choose $\theta$ so that
$
({\mathcal E}_1, {\mathcal E}_1)=1.
$

Let $\g$ be either a special linear or orthosymplectic Lie superalgebra. Then the set $\Phi$ of roots of $\g$ can be realized as a subset
of $\cE(k|l)$ for appropriate $k$ and $l$, where we will take the $k$ and $l$ to be the smallest possible.
We will call $\cE(k|l)$ the ambient space of $\Phi$.

Each choice of a Borel subalgebra corresponds to a choice of positive roots, and hence a fundamental system $\Pi=\{\alpha_1, \alpha_2, \dots, \alpha_r\}$ of simple roots, where $r$ is the rank of $\g$.
The Weyl group conjugacy classes of Borel subalgebras correspond bijectively to the admissible ordered bases of the ambient space.

The root data of the classical series of simple Lie superalgebras can be described as in Table \ref{table:classical}, where the ambient space of $\Phi$ is $\cE(m|n)$ in each case. Now $\Phi\subset \cE(m|n)_{\Z}=\sum_{a=1}^{m+n}\Z\cE_a$. Define a map $\chi:\cE(m|n))_{\Z} \longrightarrow \Z$ such that $\chi(v)=\sum_{\mu=1}^{n} b_{\mu}$ for any $v=\sum_{i=1}^m a_i\varepsilon_i+\sum_{\mu=1}^n b_{\mu}\delta_{\mu}$. Then a root $\beta\in\Phi$ is even if $\chi(\beta)$ is even, and odd otherwise.

\vspace{-2mm}
%\begin{align*}
\begin{table}[h]
\renewcommand{\arraystretch}{1.1}
\centering
\caption{Classical series of Lie superalgebras}
\label{table:classical}
\begin{tabular}{c|c}
\hline
$\g$ &\text{simple roots}\\
\hline
${\Sl}(m|n)$ & $\alpha_i={\mathcal E}_i - {\mathcal E}_{i+1},\ \ 1\le i< m+n$\\
\hline
${\osp}(2m+1|2n)$ & $\alpha_i={\mathcal E}_i - {\mathcal E}_{i+1}, \ \ 1\le i<m+n, \quad
  \alpha_{m+n}= {\mathcal E}_{ m+n}$\\
  \hline
\multirow{3}{*}{${\osp}(2m|2n)$} & $\alpha_i={\mathcal E}_i - {\mathcal E}_{i+1}, \ \ 1\le i<m+n, $ \\
& $\alpha_{m+n}=\begin{cases}
{\mathcal E}_{m+n-1}+{\mathcal E}_{m+n}, &\text{ if ${\mathcal E}_{m+n}=\varepsilon_{m}$},\\
2{\mathcal E}_{m+n}, &\text{ if ${\mathcal E}_{m+n}=\delta_{n}$}.
\end{cases}$\\
\hline
\end{tabular}
\end{table}

\vspace{-4mm}
\begin{remark}
The Lie superalgebras  $\osp(m|n)$ and $\Sl(m|n)$ reduce to ordinary Lie algebras
if $m=0$ or $n=0$.
Also, $\Sl(m|m)$ contains the ideal $\C 1_{2m}$, and $\Sl(m|m)/\C1_{2m}$ is simple.
\end{remark}

In order to describe the root data of untwisted and twisted affine Lie superalgebras of the classical series of Lie superalgebras discussed above,
we introduce the vector space $\mathcal{E}_{\delta}(k|l)$, which has a basis consisting of the basis elements of $\mathcal{E}(k|l)$
and the additional element ${\mathcal E}_0=\delta$. We extend the bilinear form on $\mathcal{E}(k|l)$ to $\mathcal{E}_\delta(k|l)$ by setting
\[
(\mathcal{E}_0, \mathcal{E}_i)=(\mathcal{E}_i, \mathcal{E}_0)=0, \ \forall i=0, 1, \dots, k+l.
\]
The resulting form still has rank $k+l$ and is degenerate.
The affine root data can be described as in Table \ref{table:affine} (see \cite{K1, K2, JWV})  using the ambient space $\cE_\delta(m|n)$ in each case.

\begin{table}[h]
\renewcommand{\arraystretch}{1.2}
\centering
\caption{Classical series of affine Lie superalgebras}
\label{table:affine}
\begin{tabular}{c|c}
\hline
 $\g$ &\text{simple roots}\\
\hline
\multirow{2}{*}{$\Sl(m|n)^{(1)}$} &$\alpha_i={\mathcal E}_i - {\mathcal E}_{i+1}$, \ \ $1\le i< m+n$,\\
& $\alpha_0={\mathcal E}_0-{\mathcal E}_1 + {\mathcal E}_{m+n}.$\\
\hline
\multirow{3}{*}{$\osp(2m+1|2n)^{(1)}$}  &$\alpha_i={\mathcal E}_i - {\mathcal E}_{i+1}$, \ \ $1\le i<m+n$, \ \  $\alpha_{m+n}= {\mathcal E}_{ m+n}$,\\
					& $\alpha_0=\begin{cases}
{\mathcal E}_0-{\mathcal E}_1-{\mathcal E}_{2},&\text{if ${\mathcal E}_1=\varepsilon_{1}$},\\
{\mathcal E}_0-2{{\mathcal E}_1},&\text{if ${\mathcal E}_1=\delta_{1}$}.
\end{cases}$ \\
\hline
\multirow{6}{*}{$\osp(2m|2n)^{(1)}$}  & $\alpha_i={\mathcal E}_i - {\mathcal E}_{i+1},\ \ 1\le i<m+n,$ \\
&$\alpha_{m+n}=\begin{cases}
{\mathcal E}_{m+n-1}+{\mathcal E}_{m+n}, &\text{ if ${\mathcal E_{m+n}}={\varepsilon}_{m}$},\\
2{\mathcal E_{m+n}}, &\text{ if ${\mathcal E_{m+n}}={\delta}_{n}$},
\end{cases}$\\
& $\alpha_0=\begin{cases}
{\mathcal E}_0-{\mathcal E}_1-{\mathcal E}_{2},&\text{if ${\mathcal E}_1=\varepsilon_{1}$},\\
{\mathcal E}_0-2{\mathcal E}_1,&\text{if ${\mathcal E}_1=\delta_{1}$}.
\end{cases}$\\
\hline
\multirow{3}{*}{$\Sl(2m+1|2n)^{(2)}$}  &$\alpha_i={\mathcal E}_i - {\mathcal E}_{i+1}, \ \ 1\le i<m+n ,\ \ \alpha_{m+n}= {\mathcal E}_{ m+n},$\\
 &$\alpha_0=\begin{cases}
{\mathcal E}_0-2{\mathcal E}_1,&\text{if ${\mathcal E}_1=\varepsilon_{1}$},\\
{\mathcal E}_0-{\mathcal E}_1-{\mathcal E}_2,&\text{if ${\mathcal E}_1=\delta_{1}$}.
\end{cases}$ \\
\hline
\multirow{6}{*}{$\Sl(2m|2n)^{(2)}$}  &$\alpha_i={\mathcal E}_i - {\mathcal E}_{i+1} \ \ 1\le i<m+n,$ \\
						 &$\alpha_{m+n}=\begin{cases}
{\mathcal E}_{m+n-1}+{\mathcal E}_{m+n}, &\text{ if ${\mathcal E_{m+n}}={\varepsilon}_{m}$}, \\
2{\mathcal E_{m+n}}, &\text{ if ${\mathcal E_{m+n}}={\delta}_n$},
\end{cases}$\\
					 &$\alpha_0=\begin{cases}
{\mathcal E}_0-2{{\mathcal E}_1},&\text{if ${\mathcal E}_1=\varepsilon_{1}$},\\
{\mathcal E}_0-{\mathcal E}_1-{\mathcal E}_{2},&\text{if ${\mathcal E}_1=\delta_{1}$}.
\end{cases}$\\
\hline
 \multirow{2}{*}{$\osp(2m+2|2n)^{(2)}$} &$\alpha_{i}={\mathcal E}_{i}-{\mathcal E}_{i+i} \ \ 1\le  i< m+n ,$\\ &$\alpha_{m+n}=\cE_{m+n},\ \ \ \alpha_{0}={\mathcal E}_0-{\mathcal E}_{1}.$\\
\hline
  \multirow{2}{*}{$\Sl(2m+1|2n+1)^{(4)}$} &$\alpha_{i}={\mathcal E}_{i}-{\mathcal E}_{i+i} \ \ 1\le  i< m+n ,$\\
&$\alpha_{m+n}=\cE_{m+n},\ \ \ \alpha_{0}={\mathcal E}_0-{\mathcal E}_{1}.$\\
\hline
\end{tabular}
\end{table}

The vector spaces ${\mathcal E}_{\delta}(m|n)$ and ${\mathcal E}_{\delta}(n|m)$ are both $(m+n+1)$-dimensional. To avoid confusion, we write the basis of ${\mathcal E}_{\delta}(n|m)$ as $\{\delta', \ve'_1,\dots \ve'_n,\delta'_1,\dots \delta'_m\}$. Consider the following
vector space isomorphism
\begin{eqnarray}\label{eq:phi-def}
\phi:  {\mathcal E}_{\delta}(m|n)\longrightarrow {\mathcal E}_{\delta}(n|m), \quad \delta\mapsto\delta', \ \
\varepsilon_i\mapsto \delta'_i, \  \  \delta_j\mapsto\varepsilon'_j, \  \  \forall i, j.
\end{eqnarray}
We will still denote its restriction to ${\mathcal E}(m|n)$ by $\phi$.

Clearly $\phi$ sends an admissible basis of ${\mathcal E}(m|n)$ to an admissible basis of
${\mathcal E}(n|m)$.  If ${\mathcal E}(m|n)$ or ${\mathcal E}_\delta(m|n)$ is the ambient space of the root system of a Lie superalgebra or affine Lie superalgebra $\g$, and $\Pi$ is a fundamental system of $\g$, then the set $\phi(\Pi)$ may be a fundamental system of another (affine) Lie superalgebra with ${\mathcal E}(n|m)$ or ${\mathcal E}_{\delta}(n|m)$ as the ambient space of roots. This happens in the following cases.

\begin{lemma}\label{lem:connection phi}
The map $\phi$ induces a one to one correspondences between fundamental systems of the (affine) Lie superalgebras in  each of the following pairs $(\g, \g')$:
\begin{enumerate}
\item[(i).] those listed in Table \ref{tbl:q-correspond};
\item[(ii).] \label{sl-pairs}  and $(\Sl(m|n), \Sl(n|m))$, $(\Sl(m|n)^{(1)}, \Sl(n|m)^{(1)})$, $(\Sl(2m|2n)^{(2)}, \Sl(2n|2m)^{(2)})$,\\  $(\Sl(2m+1|2n)^{(2)}, \Sl(2n|2m+1)^{(2)})$, $(\Sl(2m+1|2n+1)^{(4)},\Sl(2n+1|2m+1)^{(4)})$.
\end{enumerate}
\end{lemma}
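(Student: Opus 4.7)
The plan is a case-by-case verification, structured so that almost all the work is absorbed by a single observation about admissible orderings together with the explicit form of the simple roots in Tables \ref{table:classical} and \ref{table:affine}.

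First I would record the key structural property of $\phi$. By the definition \eqref{eq:phi-def}, $\phi$ swaps $\varepsilon_i \leftrightarrow \delta'_i$ and $\delta_\mu \leftrightarrow \varepsilon'_\mu$ and fixes $\delta \leftrightarrow \delta'$, so it sends any admissible ordered basis $(\mathcal{E}_1,\dots,\mathcal{E}_{m+n})$ of $\mathcal{E}(m|n)$ to an admissible ordered basis $(\mathcal{E}'_1,\dots,\mathcal{E}'_{m+n})$ of $\mathcal{E}(n|m)$, with $\mathcal{E}'_a=\phi(\mathcal{E}_a)$. One checks immediately from \eqref{eq:bilinear form} that the parameter $\theta'$ associated with the new ordering equals $1-\theta$, whence $(\phi(v),\phi(w))=-(v,w)$; the map $\phi$ is a \emph{negative} isometry. (This observation is not strictly needed for the lemma itself, but clarifies why the Dynkin data match.) Since a fundamental system of $\g$ is uniquely determined, up to Weyl-group conjugacy, by an admissible ordering of its ambient basis, it suffices to show that for each admissible ordering of $\mathcal{E}(m|n)$ (resp.\ $\mathcal{E}_\delta(m|n)$), applying $\phi$ to the simple roots listed in the relevant row of Tables \ref{table:classical}--\ref{table:affine} produces exactly the simple roots of $\g'$ for the admissible ordering $\phi(\mathcal{E}_1),\dots,\phi(\mathcal{E}_{m+n})$ of $\mathcal{E}(n|m)$ (resp.\ $\mathcal{E}_\delta(n|m)$).

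Now I would go through the pairs one at a time. For the finite-type entries $(\osp(2m+1|2n),\osp(2n+1|2m))$ and $(\Sl(m|n),\Sl(n|m))$, the simple roots read $\alpha_i=\mathcal{E}_i-\mathcal{E}_{i+1}$ (for both $\g$ and $\g'$), with the additional short root $\alpha_{m+n}=\mathcal{E}_{m+n}$ in the $\osp$ case. Applying $\phi$ gives $\mathcal{E}'_i-\mathcal{E}'_{i+1}$ and $\mathcal{E}'_{m+n}$, which is literally the format of the corresponding $\g'$ entry (with $m$ and $n$ interchanged), so the matching is automatic. The affine untwisted and twisted cases with $i\ge 1$ are identical: $\phi$ simply relabels $\mathcal{E}$'s and preserves the pattern $\alpha_i=\mathcal{E}_i-\mathcal{E}_{i+1}$ and the tail $\alpha_{m+n}$. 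So the entire verification reduces to inspecting the affine root $\alpha_0$.

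The substantive step, and the only place where parity actually matters, is matching $\alpha_0$. Here I would handle the pairs of Table \ref{tbl:q-correspond}(ii--iii) and of part (ii) of the lemma separately, each time comparing the two possible expressions for $\alpha_0$ in $\g$ (one for $\mathcal{E}_1=\varepsilon_1$, one for $\mathcal{E}_1=\delta_1$) with the two possible expressions in $\g'$ (for $\mathcal{E}'_1=\varepsilon'_1$, respectively $\mathcal{E}'_1=\delta'_1$). For instance, in the pair $(\Sl(2m+1|2n)^{(2)},\osp(2n+1|2m)^{(1)})$, if $\mathcal{E}_1=\varepsilon_1$ in $\g$ then $\alpha_0=\mathcal{E}_0-2\mathcal{E}_1$, and $\phi$ produces $\mathcal{E}'_0-2\mathcal{E}'_1$ with $\mathcal{E}'_1=\delta'_1$, which is exactly the $\osp(2n+1|2m)^{(1)}$ prescription in the $\mathcal{E}'_1=\delta'_1$ branch; the case $\mathcal{E}_1=\delta_1$ is symmetric, yielding $\mathcal{E}'_0-\mathcal{E}'_1-\mathcal{E}'_2$ in the $\mathcal{E}'_1=\varepsilon'_1$ branch. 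The same two-line check disposes of $(\osp(2m+2|2n)^{(2)},\osp(2n+2|2m)^{(2)})$ and $(\Sl(2m+1|2n+1)^{(4)},\Sl(2n+1|2m+1)^{(4)})$, in which $\alpha_0=\mathcal{E}_0-\mathcal{E}_1$ is $\phi$-invariant in form, and of the untwisted $(\Sl,\Sl)^{(1)}$ pair where $\alpha_0=\mathcal{E}_0-\mathcal{E}_1+\mathcal{E}_{m+n}$ is also manifestly form-preserved.

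Inverting $\phi$ on the target side gives the opposite direction, so the correspondence is bijective at the level of admissible orderings, hence at the level of fundamental systems (modulo Weyl-group conjugacy). The only genuinely tricky aspect is purely bookkeeping: making sure that in each affine twisted case the two $\alpha_0$-branches in the table for $\g$ are paired with the \emph{correct} branches for $\g'$ after $\phi$, because $\phi$ swaps ``$\mathcal{E}_1=\varepsilon_1$'' with ``$\mathcal{E}'_1=\delta'_1$''; once this is observed, the matching in Tables \ref{table:affine} and \ref{tbl:q-correspond} is easily confirmed line by line.
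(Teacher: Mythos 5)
The paper offers no proof of this lemma --- it is asserted as immediate from Tables \ref{tbl:q-correspond}, \ref{table:classical} and \ref{table:affine} --- and your case-by-case verification (admissible orderings go to admissible orderings, the $\alpha_i$ for $i\geq 1$ match trivially, and only the branches of $\alpha_0$ need to be paired carefully) is exactly the check the authors leave implicit; the main argument is correct. One parenthetical claim is wrong, though: with the paper's normalization $(\cE_1,\cE_1)=1$ imposed in \emph{both} ambient spaces you correctly get $\theta'=1-\theta$, but this makes $\phi$ an \emph{isometry}, $(\phi(v),\phi(w))=(v,w)$, not a negative one. For instance, if $\cE_1=\varepsilon_1$ then $\theta=0$ and $(\varepsilon_i,\varepsilon_i)=1$, while $\theta'=1$ gives $(\delta'_i,\delta'_i)=-(-1)^{1}=1$ as well. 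Since you flag this remark as unnecessary for the lemma it does not affect the proof, but it should be corrected: $\phi$ preserves the scalar products (hence the Cartan matrices) and changes only the parities of the roots, which is why the quantum correspondence ultimately involves $q\mapsto -q$ rather than a sign flip of the form.
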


\begin{remark}
(1). The imaginary root  ${\mathcal E}_0=\delta$ is even for all affine Lie superalgebras except $\Sl(2m+1|2n+1)^{(4)}$, where it is odd. \\  
(2). Define a map $\chi_{\g}:  \Z\delta+\cE(m|n)_{\Z} \longrightarrow \Z$ for each $\g$ in Table  \ref{table:affine} as follows. For any $v=z_0\delta+v'$ with $v'\in \cE(m|n)_{\Z}$, let $\chi_\g(v)=z_0+\chi(v')$ 
if $\g=\Sl(2m+1|2n+1)^{(4)}$,  and $\chi_\g(v)=\chi(v')$ otherwise. Then a simple root $\alpha_i$ is even if $\chi_\g(\alpha_i)$  is even, and odd otherwise. \\
(3). For all the pairs in case (ii) of Lemma \ref{lem:connection phi}, we have $\g=\g'$. This is why we do not consider them when studying quantum correspondences.
\end{remark}

\subsection{Quantum affine superalgebras} Hereafter we will only
consider the Lie superalgebras in Table \ref{table:classical} and  affine Lie superalgebras in Table \ref{table:affine}.

Let $\g$ be such a Lie superalgebra or affine Lie superalgebra with  a fundamental system $\Pi$.
For $\g$ in Table \ref{table:classical}, let $\Pi=\{\alpha_i\mid i=1, 2, \dots, m+n\}$, and let  $\tau\subset\{1, 2, \dots, m+n\}$ be the labelling set of the odd simple roots, i.e.,  $\{\alpha_s\mid s\in\tau\}$ is the subset of $\Pi$ consisting of the odd simple roots.  Similarly, for $\g$ in Table \ref{table:affine}, let $\Pi=\{\alpha_i\mid i=0, 1, 2, \dots, m+n\}$, and let  $\tau\subset\{0, 1, 2, \dots, m+n\}$ be the labelling set of the odd simple roots.
Define $b_{i j}=(\al_i,\al_j)$ for all $i, j$. Then the
Cartan matrix of $\g$ corresponding to $\Pi$ is given by
\[
A=(a_{ij}) \quad\text{with}\quad
a_{ij}=\begin{cases}\dfrac{2b_{ij}}{b_{ii}},&\mbox{if}~b_{ii}\neq 0\\b_{ij},&\mbox{if}~b_{ii}=0
\end{cases}.
\]
Note that $a_{i i}=0$ if and only if $\alpha_i$ is an isotropic odd simple root.
We will represent fundamental systems by Dynkin diagrams \cite{K1,K2,JWV, Z1},
following the convention of Kac \cite{K1}.  In particular, a node
$\circ$ corresponds to an even simple root;
$\otimes$ to an odd isotropic simple root;
$\bullet$ to an odd non-isotropic simple root, and
$\times$ stands for $\circ$ or $\otimes$, depending on whether the simple root is even or odd.  Note that
the sub-diagrams
\begin{picture}(72, 20)(4, 6)
\put(10, 10){\circle{10}}
\put(16, 10){\line(1, 0){18}}
\put(15,7){$<$}
\put(35, 6){\Large$\otimes$}
\put(46, 10){\line(1, 0){18}}
\put(26, 12){\tiny $2$}
\put(58,7){$>$}
\put(65, 6){\Large$\otimes$}
\end{picture}
 and
 \begin{picture}(75, 20)(25, 6)
\put(30, 10){\circle{10}}
\put(37, 10){\line(1, 0){18}}
\put(35,7){$<$}
\put(55, 6){\Large$\otimes$}
\put(65, 10){\line(1, 0){18}}
\put(47, 12){\tiny $2$}
\put(78,7){$>$}
\put(90, 10){\circle{10}}
\end{picture}
correspond respectively to the sub-matrices  $\begin{bmatrix} 2 &-1 &0\\ -2& 0& 1\\0 &1 &0 \end{bmatrix}$ and $\begin{bmatrix} 2 &-1 &0\\ -2& 0& 1\\0 &-1 &2 \end{bmatrix}$ in Cartan matrices.

For convenience, we take a slight variation of the usual definition \cite{BGZ, Y, ZGB} of quantised universal enveloping superalgebras (see Remark \ref{rem:def-change} below for further comments).
Let us
fix $q\in\mb{C}$ such that $q\ne 0, \pm 1$, and let $q^{1/2}$ be a fixed
square root of $q$.  Denote
\[
\begin{aligned}
q_i=\begin{cases}q^{\frac{(\alpha_i,\alpha_i)}{2}},&\mbox{if }(\al_i,\al_i)\neq 0\\q,&\mbox{if }(\al_i,\al_i)=0,
\end{cases}\qquad
\theta_i=\begin{cases}1,&\mbox{if }|(\al_i,\al_i)|=1,2\\2,&\mbox{if } |(\al_i,\al_i)|=0,4.\end{cases}
\end{aligned}
\]
Note that $q_i^{a_{ij}}=q_j^{a_{j,i}}=q^{(\alpha_i,\alpha_j)}$. In what follows, $[x,y]_v=xy-(-1)^{[x][y]}v yx$.
\begin{definition}\label{defi:quantised}
The {\em quantised universal enveloping superalgebra} $\U_q(\g,\Pi)$ of $\g$ with the fundamental system $\Pi$ is an associative superalgebra over $\mb{C}$ with identity, which is defined by the following presentation:
The generators are
$e_i,f_i,k_i^{\pm1}$, where $e_s,f_s, (s\in\tau),$ are odd and the rest are even, and the relations are given by
\begin{enumerate}
\item\qquad\qquad\quad\quad\quad\quad $k_ik_i^{-1}=k_i^{-1}k_i=1,\quad k_ik_j=k_jk_i,$
\[
\begin{array}{l}
k_ie_jk_i^{-1}=q_i^{a_{ij}}e_j,\quad k_if_jk_i^{-1}=q_i^{-a_{ij}}f_j,\\
e_if_j-(-1)^{[e_i][f_j]}f_je_i=\de_{ij}\dfrac{k_i-k_i^{-1}}{q^{\theta_i}-q^{-\theta_i}};
\end{array}
\]
\item\qquad\qquad if $a_{ss}=0$, \qquad\quad $(e_s)^2=(f_s)^2=0,$
\[
\begin{aligned}
&\text{ if } a_{ii}\neq0,i\neq j, \quad \lb\mbox{Ad}_{e_i}\rb^{1-a_{ij}}(e_j)=\lb\mbox{Ad}_{f_i}\rb^{1-a_{ij}}(f_j)=0,
\end{aligned}\]
where $\mbox{Ad}_{e_i}(x)$ and $\mbox{Ad}_{f_i}(x)$ are defined by
\eqref{eq:Ad};

\item  and higher order Serre relations \cite{Y1} (also see \cite{Z1}) associated with the following subdiagrams of Dynkin diagrams
\end{enumerate}

{\renewcommand\baselinestretch{1.2}\selectfont
(A)\quad \label{Serre:case-1}
\begin{picture}(65, 20)(13, 5)
\put(10, 7){$\times$}
\put(15, 10){\line(1, 0){20}}
 \put(35, 6){\Large$\otimes$ }
\put(45, 10){\line(1, 0){20}}
\put(62, 7){$\times$}
\put(8, -2){\tiny $s-1$}
\put(39, -2){\tiny $s$}
\put(60, -2){\tiny $s+1$}
\end{picture}
with $a_{s-1,s}=-a_{s,s+1}$, the
associated higher order Serre relations are
\[\begin{aligned}
\mbox{Ad}_{e_s}\mbox{Ad}_{e_{s-1}}\mbox{Ad}_{e_s}(e_{s+1})=0,\quad
\mbox{Ad}_{f_s}\mbox{Ad}_{f_{s-1}}\mbox{Ad}_{f_s}(f_{s+1})=0;
\end{aligned}\]

%\smallskip
(B)\quad \label{Serre:case-2}
\begin{picture}(70, 20)(13, 5)
\put(10, 7){$\times$}
\put(15, 10){\line(1, 0){20}}
\put(35, 6){\Large$\otimes$}
\put(45, 11){\line(1, 0){17}}
\put(45, 9){\line(1, 0){17}}
\put(57, 6.5){$>$}
\put(70,10){\circle{10}}
\put(8, -2){\tiny $s-1$}
\put(39, -2){\tiny $s$}
\put(63, -2){\tiny $s+1$}
\put(76, 7){,}
\end{picture}
the associated higher order Serre elements are
\[\begin{aligned}
\mbox{Ad}_{e_s}\mbox{Ad}_{e_{s-1}}\mbox{Ad}_{e_s}(e_{s+1})=0,\quad
\mbox{Ad}_{f_s}\mbox{Ad}_{f_{s-1}}\mbox{Ad}_{f_s}(f_{s+1})=0;
\end{aligned}\]

%\smallskip
(C) \quad \label{case-3}
\begin{picture}(68, 20)(15, 5)
\put(10, 6.5){$\times$}
\put(15, 10){\line(1, 0){20}}
\put(35, 6){\Large$\otimes$ }
\put(45, 11){\line(1, 0){17}}
\put(45, 9){\line(1, 0){17}}
\put(57, 6.5){$>$}
\put(70, 10){\circle*{10}}
\put(8, -2){\tiny $s-1$}
\put(39, -2){\tiny $s$}
\put(63, -2){\tiny $s+1$}
\put(76, 7){,}
\end{picture}
the associated higher order Serre relations are
\[\begin{aligned}
\mbox{Ad}_{e_s}\mbox{Ad}_{e_{s-1}}\mbox{Ad}_{e_s}(e_{s+1})=0,\quad
\mbox{Ad}_{f_s}\mbox{Ad}_{f_{s-1}}\mbox{Ad}_{f_s}(f_{s+1})=0;
\end{aligned}\]

%\smallskip
(D)\quad \label{case-4}
\begin{picture}(72, 20)(10, 5)
\put(10, 10){\circle{10}}
\put(16, 10){\line(1, 0){18}}
\put(15,7){$<$}
\put(35, 6){\Large$\otimes$}
\put(46, 10){\line(1, 0){18}}
\put(26, 12){\tiny $2$}
\put(58,7){$>$}
\put(65, 6){\Large$\otimes$}
\put(3, -2){\tiny $s-1$}
\put(39, -2){\tiny $s$}
\put(63, -2){\tiny $s+1$}
\put(76, 7){,}
\end{picture}
the associated higher order Serre relations are
\[
\begin{aligned}
\left[\ad_{e_{s+1}}(e_{s}), \left[\ad_{e_{s+1}}(e_s) , \ad_{e_s}(e_{s-1})\right]_{v_1}\right]_{v_2}=0,\\
\left[\ad_{f_{s+1}}(e_{s}), \left[\ad_{f_{s+1}}(f_s) , \ad_{f_s}(f_{s-1})\right]_{v_1}\right]_{v_2}=0,
\end{aligned}\]
where $v_1=q^{-(\alpha_s,\alpha_{s+1})}, v_2=q^{(\alpha_s,\alpha_{s+1})}$;

%\smallskip
(E)\quad \label{case-5}
\begin{picture}(100, 20)(30, 5)
\put(30, 10){\circle{10}}
\put(37, 10){\line(1, 0){18}}
\put(35,6){$<$}
\put(55, 6.5){\Large$\otimes$}
\put(65, 10){\line(1, 0){18}}
\put(47, 12){\tiny $2$}
\put(78,6.5){$>$}
\put(90, 10){\circle{10}}
\put(95, 10){\line(1, 0){20}}
\put(115, 6.5){$\times$}
\put(112, -2){\tiny $s+2$}
\put(26, -2){\tiny $s-1$}
\put(59, -2){\tiny $s$}
\put(82, -2){\tiny $s+1$}
\put(123, 7){,}
\end{picture}
the associated higher order Serre relations are
\[
\begin{aligned}
&\left[\ad_{e_{s+2}}(\ad_{e_{s+1}}e_s),\left[\ad_{e_{s+1}}e_s, \ad_{e_s}e_{s-1}
\right]_{v_1}\right]=0,\\
&\left[\ad_{f_{s+2}}(\ad_{f_{s+1}}f_s),\left[\ad_{f_{s+1}}f_s, \ad_{f_s}f_{s-1}
\right]_{v_1}\right]=0, \ \text{ $v_1=q^{-(\alpha_{s},\alpha_{s+1})}$};
\end{aligned}
\]
%\text{where $v_1=q^{-(\alpha_{s},\alpha_{s+1})}$};

%\smallskip
(F)\quad \label{case-6}
\begin{picture}(65, 30)(0, 3)
\put(9, 6.8){$\times$}
\put(15, 10){\line(1, 1){20}}
\put(15,10){\line(1, -1){20}}
\put(35, 26){\Large$\otimes$}
\put(39, 25){\line(0, -1){30}}
\put(41, 25){\line(0, -1){30}}
\put(35, -14){\Large$\otimes$}
\put(6, -2){\tiny $s-1$}
\put(47, 28){\tiny $s$}
\put(47, -13){\tiny $s+1$}
\put(50, 5){,}
\end{picture}
the associated higher order Serre relations are\\
\[\begin{aligned}
&\ad_{e_s}\ad_{e_{s+1}}(e_{s-1})-\ad_{e_{s+1}}\ad_{e_s}(e_{s-1})=0,\\
&\ad_{f_s}\ad_{f_{s+1}}(f_{s-1})-\ad_{f_{s+1}}\ad_{f_s}(f_{s-1})=0;
\end{aligned}\]}
where $\mbox{Ad}_{e_i}(x)$ and $\mbox{Ad}_{f_i}(x)$ are defined by
\begin{eqnarray}\label{eq:Ad}
\mbox{Ad}_{e_i}(x)=e_ix-(-1)^{[e_i][x]}k_ixk_i^{-1}e_i, \quad
\mbox{Ad}_{f_i}(x)=f_ix-(-1)^{[f_i][x]}k_i^{-1}xk_if_i.
\end{eqnarray}
\end{definition}

\begin{remark}\label{rem:def-change}  We have used $\dfrac{k_i-k_i^{-1}}{q^{\theta_i}-q^{-\theta_i}}$ instead of the standard expression $\dfrac{k_i-k_i^{-1}}{q_i-q_i^{-1}}$
in the third relation of (1).
A consequence is that $q^{\pm 1/2}$ never  appears in our definition of the quantised universal enveloping superalgebras.
\end{remark}

Corresponding to each simple root $\alpha_i\in \Pi$ of $\g$,
we introduce a  group $\Z_2$ generated by $\sigma_i$ such that $\sigma_i^2=1$, and let $\mathrm{G}$ be the direct product of all such groups.
Then $G=\Z_2^{\times |\Pi|}$, where $|\Pi|$ denotes the cardinality of $\Pi$.
We define a $\mathrm{G}$-action on $\U_q(\g,\Pi)$ by
\[
\begin{aligned}
&\sigma_i\cdot e_j=(-1)^{(\alpha_i,\alpha_j)}e_j, \quad \sigma_i\cdot f_j=(-1)^{-(\alpha_i,\alpha_j)}f_j, \quad \sigma_i\cdot k_j=k_j, \quad\text{$i\ne 0$}, \\
%\end{aligned}
%\]
%and also
%\[
%\begin{aligned}
&\sigma_0\cdot e_j=(-1)^{\delta_{i,0}}e_j,\qquad \sigma_0\cdot f_j=(-1)^{-\delta_{i,0}}f_j, \qquad \sigma_0\cdot k_j=k_j, \quad \forall j,
\end{aligned}
\]
where the second line is present only when $\g$ is an affine superalgebra.

\begin{definition}\label{defi:smash product}
Let $\UU_q(\g,\Pi)=\U_q(\g,\Pi)\sharp\C \mathrm{G}$
denote the smash product superalgebra with $\UU_q(\g,\Pi)_{\bar 0}=\U_q(\g,\Pi)_{\bar 0}\otimes\C \mathrm{G}$ and $\UU_q(\g,\Pi)_{\bar 1}=\U_q(\g,\Pi)_{\bar 1}\otimes\C \mathrm{G}$, where the multiplication is defined by
\[
\begin{aligned}
(x\otimes \sigma)(x'\otimes \sigma')=x(\sigma\cdot x')\otimes \sigma\sigma', \quad \forall x, x'\in \U_q(\g,\Pi), \ \sigma,\sigma'\in \C\mathrm{G}.
\end{aligned}
\]
\end{definition}
To simplify the notation, we write $x$ for  $x\otimes 1$ for any $x\in\U_q(\g,\Pi)$, and
$\sigma$ for $1\otimes \sigma$ for any $\sigma\in\C\mathrm{G}$. Then in $\UU_q(\g, \Pi)$, we have
\begin{eqnarray}\label{eq:sigma-act}
\begin{aligned}
&\sigma_i e_j\sigma_i^{-1}=(-1)^{(\alpha_i,\alpha_j)}e_j,\ \
\sigma_i f_j\sigma_i^{-1}=(-1)^{-(\alpha_i,\alpha_j)}f_j, \ \
\sigma_i k_j\sigma_i^{-1}=k_j, \ \  i\ne 0,\\
&\sigma_0 e_j \sigma^{-1}_0=(-1)^{\delta_{i,0}}e_j,\ \quad \sigma_0 f_j \sigma^{-1}_0=(-1)^{-\delta_{i,0}}f_j,\ \ \quad \sigma_0 k_j \sigma^{-1}_0=k_j,  \ \  \forall j.
\end{aligned}
\end{eqnarray}

The quantised universal enveloping algebra $\U_q(\g, \Pi)$ is a Hopf superalgebra, and the group algebra of $G$ has a canonical Hopf algebra structure. Their smash product inherits a Hopf superalgebra structure.
\begin{proposition}\label{prop:hopf}
The quantum superalgebra $\UU_q(\g,\Pi)$ is a Hopf superalgebra with

\noindent
comultiplication $\De:\UU_q(\g,\Pi)\rightarrow \UU_q(\g,\Pi)\otimes \UU_q(\g,\Pi)$,
\[
\begin{aligned}
\De(e_i)=e_i\otimes 1+k_i\otimes e_i, &\quad
\De(f_i)=f_i\otimes k_i^{-1}+1\otimes f_i,\\
\De(k_i)=k_i\otimes k_i, &\quad
\De(\sigma_i)=\sigma_i\otimes\sigma_i;
\end{aligned}
\]
counit $\ve:\UU_q(\g,\Pi)\rightarrow \mb{C}$,\ \
$
\ve(e_i)=\ve(f_i)=0, \ \ \ve(k_i^{\pm1})=1, \ \ \ve(\sigma_i)=1;
$
and
 antipode $S:\UU_q(\g,\Pi)\rightarrow \UU_q(\g,\Pi)$, \ \
$
\begin{aligned}
S(e_i)=-k_i^{-1}e_i,\ \ S(f_i)=-f_ik_i, \ \ S(k_i)=k_i^{-1}, \ \ S(\sigma_i)=\sigma_i^{-1}.
\end{aligned}
$
\end{proposition}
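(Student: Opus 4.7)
The plan is to derive the Hopf superalgebra structure on $\UU_q(\g,\Pi)$ from the smash product construction, given that $\U_q(\g,\Pi)$ carries a Hopf superalgebra structure on which $\mathrm{G}$ acts by Hopf superalgebra automorphisms. First I would recall from \cite{BGZ, Y, ZGB, Z1} that the formulas of the proposition, restricted to the generators $e_i, f_i, k_i^{\pm 1}$, equip $\U_q(\g,\Pi)$ itself with a Hopf superalgebra structure; the only nontrivial point is that $\De, \ve, S$ respect the defining relations of Definition~\ref{defi:quantised}. The Cartan, cross, and quadratic Serre relations are treated by direct calculation, while for the higher-order Serre relations (A)--(F) one uses the standard identities expressing $\De\bigl(\ad_{e_i}(x)\bigr)$ and $\De\bigl(\ad_{f_i}(x)\bigr)$ in terms of $\De(x)$ together with induction on the nesting depth of Ad-operators.

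Second, $\C\mathrm{G}$ with each $\sigma_i$ group-like is a commutative, cocommutative Hopf algebra. The key compatibility to verify is that the $\mathrm{G}$-action on $\U_q(\g,\Pi)$ given in \eqref{eq:sigma-act} is by Hopf superalgebra automorphisms. Since each $\sigma_i$ multiplies every generator by a scalar in $\{\pm 1\}$, it preserves the $\Z_2$-grading; because the scalars define a character of the abelian group generated by the simple roots, the action extends to an algebra automorphism of $\U_q(\g,\Pi)$. Compatibility with the coproduct then reduces to a check on generators: for instance,
\[
\De(\sigma_i\cdot e_j) = (-1)^{(\alpha_i,\alpha_j)}\De(e_j) = (\sigma_i\otimes\sigma_i)\cdot\De(e_j),
\]
using $\sigma_i\cdot 1 = 1$ and $\sigma_i\cdot k_j = k_j$; the analogous checks for $f_j, k_j^{\pm 1}$ and for $\ve, S$ are equally immediate.

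With these ingredients in place, the general theorem on smash products of Hopf superalgebras (see, e.g., \cite[Chapter 10.1]{Ma}) equips $\UU_q(\g,\Pi) = \U_q(\g,\Pi)\sharp\C\mathrm{G}$ with a Hopf superalgebra structure whose coproduct, counit, and antipode are given by $\De(x\sharp\sigma) = \sum (x_{(1)}\sharp\sigma_{(1)})\otimes(x_{(2)}\sharp\sigma_{(2)})$, $\ve(x\sharp\sigma) = \ve(x)\ve(\sigma)$, and $S(x\sharp\sigma) = (1\sharp S(\sigma))(S(x)\sharp 1)$. Since $\C\mathrm{G}$ is cocommutative with $\De(\sigma_i) = \sigma_i\otimes\sigma_i$, specialising these formulas to the generators under the abbreviation $x = x\sharp 1$, $\sigma = 1\sharp\sigma$ recovers precisely the expressions displayed in the proposition. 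The main obstacle is the first step: verifying that $\De$ preserves the higher-order Serre relations (D) and (E) is technically intricate, though it is already contained in the quantum supergroup literature.
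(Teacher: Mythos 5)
Your proposal is correct and follows essentially the same route as the paper, which justifies the proposition only by the remark that $\U_q(\g,\Pi)$ is a Hopf superalgebra, $\C\mathrm{G}$ is a Hopf algebra, and their smash product inherits a Hopf superalgebra structure. Your write-up merely supplies the details the paper leaves implicit (the check that $\mathrm{G}$ acts by Hopf superalgebra automorphisms and the specialisation of the smash-product formulas to generators), so there is nothing substantively different to compare.
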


\section{Algebraic isomorphisms}\label{sect:quantum}

We prove Theorem \ref{thm:main-quan} in this section.  The proof requires
detailed considerations of the structures of the relevant quantum superalgebras,
thus is very lengthy as each pair $(\g, \g')$ involves
numerous cases corresponding to different choices of fundamental systems.
We will present only the main steps of the proof, omitting most of the detailed calculations.

Let $(\g, \g')$ be  a pair of  Lie superalgebras or affine Lie superalgebras
in Theorem \ref{thm:main-quan}. Choose any fundamental system $\Pi$  for $\g$ with $\tau$ being the labelling set for the odd simple roots. By Lemma \ref{lem:connection phi},
$\Pi'=\phi(\Pi)$ is the corresponding  fundamental system of $\g'$ with the labelling set $\tau'$ for the odd simple roots.
We write $\alpha'_i=\phi(\alpha_i)$ for the simple roots of $\g'$.   Note that $\alpha'_s\in\Pi'$ is isotropic if and only if $\alpha_s\in\Pi$ is.

Let  $\{e_i, f_i, k_i^{\pm 1}, \sigma_i\}$ be the set of generators of
the quantum superalgebra $\UU_q(\g, \Pi)$, and denote by  $\mathrm{G}$
the group generated by the elements $\sigma_i$. Similarly, we let
$\{e'_i, f'_i, {k'}_i^{\pm 1}, \sigma'_i\}$ be the standard generating set of $\UU_{-q}(\g', \Pi')$, and denote by $\mathrm{G}'$
the group generated by the elements $\sigma'_i$.

For $1\leq i\leq m+n$, we introduce the following elements
\begin{align*}
&\Phi_i=\prod_{k=i}^{m+n}\sigma_k,
\quad \tilde{\Phi}_i=\prod_{k=0}^{m+n}\sigma_{i+2k},
\ \quad \text{in $\UU_q(\g, \Pi)$};\\
&\Phi'_i=\prod_{k=i}^{m+n}\sigma'_k,\quad \tilde{\Phi}'_i=\prod_{k=0}^{m+n}\sigma'_{i+2k},
\quad \text{in $\UU_{-q}(\g', \Pi')$},
\end{align*}
where $\sigma_j\in G$ and $\sigma'_j\in G'$ are both $1$ if $j\geq m+n+1$.  Note that
\[
\Phi_{m+n}=\tilde{\Phi}_{m+n}=\sigma_{m+n}, \quad  \Phi'_{m+n}=\tilde{\Phi}'_{m+n}=\sigma'_{m+n}.
\]

For $i=1,2,\dots,m+n$, we define the following elements
\begin{eqnarray}\label{eq:connect B}
\begin{aligned}
&E_{i}=\Phi_{i+1}e_i,\quad F_{i}=\Phi_if_i,\quad i\notin \tau, \\
&E_i=\tilde{\Phi}_{i+2}e_i,\quad F_i=\tilde{\Phi}_{i}f_i,\quad i\in \tau,\\
&K_i=\sigma_i k_i, \quad \text{which belong to $\UU_q(\g, \Pi)$; and}  \\
&E'_{i}=\Phi'_{i+1}e'_i,\quad F'_{i}=\Phi'_i f'_i,\quad i\notin \tau', \\
&E'_i=\tilde{\Phi}'_{i+2}e'_i,\quad F'_i=\tilde{\Phi}'_{i}f'_i,\quad i\in \tau',\\
&K'_i=\sigma'_i k'_i,  \quad  \text{which belong to $\UU_{-q}(\g', \Pi')$},
\end{aligned}
\end{eqnarray}
where $\Phi_{m+n+k}=\tilde{\Phi}_{m+n+k}=1$ and
$\Phi'_{m+n+k}=\tilde{\Phi}'_{m+n+k}=1$
for all $k>0$.

If $(\g, \g')$ is a pair of affine Lie superalgebras, we will also  define  elements
\[
E_0, F_0, K_0\in \UU_q(\g, \Pi), \quad \text{and} \quad E'_0, F'_0, K'_0\in\UU_{-q}(\g', \Pi'),
\]
the explicit expressions of which depend on the affine Lie superalgebras
and fundamental systems, and will be given in the proof of the following result.

\begin{theorem}\label{thm:iso-main} The associative algebra isomorphism
$ \UU_{-q}(\g', \Pi')\stackrel{\cong}{\longrightarrow}\UU_q(\g, \Pi)$ of Theorem \ref{thm:main-quan} is given by
\begin{equation}\label{eq:B-map}
\sigma'_i\mapsto \sigma_i, \quad e'_i \mapsto E_i, \quad f'_i \mapsto F_i, \quad k'_i \mapsto K_i, \quad \forall i,
\end{equation}
with the inverse map
\begin{equation}\label{eq:B-map-inv}
\sigma_i\mapsto \sigma'_i, \quad e_i \mapsto E'_i, \quad f_i \mapsto F'_i, \quad k_i \mapsto K'_i, \quad \forall i.
\end{equation}
\end{theorem}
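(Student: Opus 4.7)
The plan is to verify directly that the elements $E_i, F_i, K_i^{\pm 1}, \sigma_i \in \UU_q(\g, \Pi)$ satisfy each of the defining relations of $\UU_{-q}(\g', \Pi')$ in Definition \ref{defi:quantised}; the symmetric check on \eqref{eq:B-map-inv} then produces an algebra homomorphism in the opposite direction, and the composition is the identity on generators because $\Phi_i^2 = \tilde\Phi_i^2 = 1$ makes the doubled $\sigma$-factors collapse. Two structural facts drive the bookkeeping. First, the map $\phi$ is an isometry of ambient spaces: the normalisation $({\mathcal E}_1, {\mathcal E}_1) = 1$ forces $\theta + \theta' = 1$, and this sign cancels the inherent minus in $(\delta_\mu, \delta_\nu) = -(-1)^\theta \delta_{\mu\nu}$, so that $(\alpha'_i, \alpha'_j) = (\alpha_i, \alpha_j)$ for all $i, j$; the Cartan matrices coincide and $\theta_i = \theta'_i$. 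Second, while parities of long simple roots ${\mathcal E}_i - {\mathcal E}_{i+1}$ are preserved, parities of short simple roots such as $\alpha_{m+n} = {\mathcal E}_{m+n}$ (and, in certain affine cases, $\alpha_0$) flip. The products $\Phi_i$ and $\tilde\Phi_i$ of \eqref{eq:connect B} are chosen precisely so that the signs they generate, when commuted past $e_j, f_j$ via \eqref{eq:sigma-act}, absorb these parity flips.

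The group relations and the Cartan relations then follow immediately. For instance, the $\sigma_i$ inside $K_i$ commutes past every $\sigma$-factor in $E_j$ and contributes $(-1)^{(\alpha_i, \alpha_j)}$ when it crosses $e_j$, while $k_i e_j k_i^{-1} = q^{(\alpha_i, \alpha_j)} e_j$, giving
\[
K_i E_j K_i^{-1} = (-1)^{(\alpha_i, \alpha_j)}\, q^{(\alpha_i, \alpha_j)} E_j = (-q)^{(\alpha'_i, \alpha'_j)} E_j,
\]
which is the Cartan relation in $\UU_{-q}(\g', \Pi')$. An analogous calculation handles $K_i F_j K_i^{-1}$.

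The central computation is the bracket relation between $E_i$ and $F_j$. Writing $E_i = P_i e_i$ and $F_j = Q_j f_j$, where $P_i$ and $Q_j$ are the appropriate products of $\sigma$-generators from \eqref{eq:connect B}, commuting $e_i$ past $Q_j$ and $f_j$ past $P_i$ via \eqref{eq:sigma-act} produces sign factors of the form $(-1)^{\sum_k (\alpha_k, \alpha_i)}$ and $(-1)^{\sum_k (\alpha_k, \alpha_j)}$ indexed by the $\sigma$-factors in $Q_j$ and $P_i$ respectively. Moving all $\sigma$'s to the left and applying the relation $e_i f_j - (-1)^{[e_i][f_j]} f_j e_i = \delta_{ij}(k_i - k_i^{-1})/(q^{\theta_i} - q^{-\theta_i})$ in $\UU_q(\g, \Pi)$, the claim for $i \neq j$ reduces to the parity identity
\[
\sum_{k \in Q_j} (\alpha_k, \alpha_i) + [e_i][f_j] \equiv \sum_{k \in P_i} (\alpha_k, \alpha_j) + [e'_i][f'_j] \pmod 2,
\]
which we verify by a short case analysis on the positions of $i, j$ relative to $\tau$ and to the locations of short simple roots. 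For $i = j$ one further invokes the scalar identity $(-q)^{\theta_i} - (-q)^{-\theta_i} = (-1)^{\theta_i}(q^{\theta_i} - q^{-\theta_i})$; the residual $\sigma_i$ arising from contracting $P_i Q_i$ supplies the factor needed so that the right-hand side is exactly $(K_i - K_i^{-1})/((-q)^{\theta_i} - (-q)^{-\theta_i})$.

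The Serre-type relations are treated in the same spirit. The quadratic relations $E_s^2 = 0 = F_s^2$ for isotropic $s \in \tau$ reduce to $e_s^2 = 0 = f_s^2$ after pulling out the central squared $\sigma$-factors. For the standard $(1 - a_{ij})$-fold Serre relations and the higher order Serre relations (A)--(F), we expand $\ad_{E_i}$ by commuting its $\sigma$-factors past the $e_j$-terms; since $\phi$ preserves the Dynkin subdiagrams up to the parity relabelling of nodes, each subdiagram of type (A)--(F) in $\Pi'$ matches a subdiagram of the same type in $\Pi$, and the original Serre identity in $\UU_q(\g, \Pi)$, together with careful sign tracking from the $\varepsilon$-$\delta$ swaps, yields the required relation. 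For the affine pairs of Theorem \ref{thm:main-quan} we supply explicit formulae for $E_0, F_0, K_0$ depending on the shape of $\alpha_0$ and $\alpha'_0$ under the chosen fundamental system, and repeat all the verifications with the index $0$ included. The primary obstacle is the case-by-case sign bookkeeping in the higher order Serre relations, especially (D) and (E), where the parameters $v_1, v_2 = q^{\mp(\alpha_s, \alpha_{s+1})}$ must interact correctly with the additional $(-1)$-factors that arise when the quantum parameter is deformed from $q$ to $-q$.
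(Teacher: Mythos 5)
Your proposal takes essentially the same route as the paper's proof: reduce everything to checking that $E_i, F_i, K_i^{\pm1}, \sigma_i$ (and the case-by-case elements $E_0,F_0,K_0$ for the affine node) satisfy the defining relations of $\UU_{-q}(\g',\Pi')$, using the sign bookkeeping of \eqref{eq:sigma-act} together with identities relating quantum integers at $q$ and at $-q$, and then conclude invertibility from $\Phi_i^2=\tilde\Phi_i^2=1$. The only small imprecision is the claim that each higher order Serre subdiagram in $\Pi'$ has the \emph{same type} as its counterpart in $\Pi$: since $\phi$ flips the parity of the short non-isotropic node, types (B) and (C) are actually interchanged, but as their associated relations have identical form this does not affect the argument.
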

%\begin{proof}
This is a more explicit version of Theorem \ref{thm:main-quan}.
If we can prove that the maps \eqref{eq:B-map} and \eqref{eq:B-map-inv} are algebra homomorphisms,
then we immediately see that they are inverses of each other since $\Phi^2_i=1$ for all $i$.
It is clear from equation \eqref{eq:sigma-act} that the maps preserve the action of $G$ on $\Uq(\g, \Pi)$ and the action of $G'$ on $\U_{-q}(\g', \Pi')$.  Thus what remains to be shown is that
\begin{itemize}
\item $E_i, F_i, K^{\pm 1}_i$ satisfy the defining relations of $\U_{-q}(\g', \Pi')$
obeyed by the standard generators $e'_i, f'_i, {k'}_i^{\pm 1}$ ($1\le i\le m+n$);
and
\item
$E'_i, F'_i, {K'}^{\pm 1}_i$ satisfy the defining relations of $\U_q(\g, \Pi)$ obeyed by the standard generators $e_i, f_i, k_i^{\pm 1}$ ($1\le i\le m+n$).
\end{itemize}
The proof simply boils down to deducing the desired relations satisfied
by $E_i, F_i, K^{\pm 1}_i$ (rsep.  $E'_i, F'_i, {K'}^{\pm 1}_i$), from the defining relations of  $\U_q(\g, \Pi)$ (resp. $\U_{-q}(\g', \Pi')$).
The proofs for the two statements are exactly the same, thus
we will only present the details for the first one, which will occupy the next three sections. %for each pair $(\g, \g')$.
%\end{proof}
The following notation will be used,
 %In order to give more details for the proof,  we introduce the following notation:
\[\begin{aligned}
&[k]_z=\frac{z^k-z^{-k}}{z-z^{-1}},\quad \{k\}_z=\frac{z^k-(-z)^{-k}}{z+z^{-1}},\quad \mbox{for}\ \  k\in\Z, \\
&[0]_z!=\{0\}_z!=1, \quad [N]_z!=\prod_{i=1}^N[i]_z,\quad \{N\}_z!=\prod_{i=1}^N\{i\}_z, \mbox{ for}\ \ 1\le N\in\N,\\
&\begin{bmatrix} N\\k\end{bmatrix}_z=\frac{[N]_z!}{[N-k]_z![k]_z!},\quad \left\{\begin{matrix} N\\k\end{matrix}\right\}_z=\frac{\{N\}_z!}{\{N-k\}_z! \{k\}_z!},\quad \mbox{for}\ \  k\leq N \in \N,
\end{aligned}
\]
where $z\in\C$ such that the expressions above are defined.

\subsection{The case of  $\osp(2m+1|2n)$ and $\osp(2n+1|2m)$ }\label{sect:osp}

Recall from Section \ref{sect:roots} that
the ambient space of the roots of $\g=\osp(2m+1|2n)$ is $\cE(m|n)$.
Each admissible ordered basis of it leads to a fundamental system $\Pi=\{\alpha_i\mid 1\le i\le m+n\}$ with the $\alpha_i$ given in
Table \ref{table:classical}.
Now $\g'=\osp(2n+1|2m)$ with the corresponding fundamental system $\Pi'=\{\alpha'_i=\phi(\alpha_i)\mid i=1, 2, \dots, m+n\}$.
The ambient space of the roots is $\cE(n|m)$. In the case $\alpha_{m+n}=\delta_{n}\in \Pi$, which is odd, $\alpha'_{m+n}=\varepsilon_n$ is an even simple root in $\Pi'$, and the
Dynkin diagrams of $\Pi$ and $\Pi'$ are the Type (1) diagrams in Table \ref{table:Dynkin diagram-B}. In this case,  $\tau'=\tau\backslash\{m+n\}$.   If $\alpha_{m+n}=\varepsilon_m\in\Pi$,
which is even,  $\alpha'_{m+n}=\delta_m$ is an odd simple root in $\Pi'$, and the
Dynkin diagrams of $\Pi$ and $\Pi'$ are the Type (2) diagrams in Table \ref{table:Dynkin diagram-B}.  In this case, $\tau=\tau'\backslash\{m+n\}$.

\begin{table}[h]
\caption{Dynkin diagram of $\osp(2m+1|2n)$ and $\osp(2n+1|2m)$}
\label{table:Dynkin diagram-B}
\begin{tabular}{ >{\centering\arraybackslash}m{0.4in} | >{\centering\arraybackslash}m{2.2in} | >{\centering\arraybackslash}m{2.2in}  }
\hline
 Type
&\vspace{2mm} $\g=\osp(2m+1|2n)$ \vspace{2mm} & $\g'=\osp(2n+1|2m)$\\
\hline
(1) &
\begin{picture}(0, 25)(75, 2)
\put(10, 10.5){$\times$}
\put(10,2){\tiny$\alpha_1$}
\put(15, 14){\line(1, 0){20}}
 \put(35, 10.5){$\times$}
\put(35,2){\tiny$\alpha_2$}
\put(42, 14){\line(1, 0){20}}
\put(65,11){$\cdots$}
\put(80, 14){\line(1, 0){20}}
\put(100, 11){$\times$}
\put(90,2){\tiny$\alpha_{m+n-1}$}
\put(106, 15){\line(1, 0){17}}
\put(106, 13){\line(1, 0){17}}
\put(118, 10.5){$>$}
\put(130, 14){\circle*{10}}
\put(123,2){\tiny$\alpha_{m+n}$}
\end{picture}
&
\begin{picture}(0, 25)(75, 2)
\put(10, 10.5){$\times$}
\put(10,2){\tiny$\alpha'_1$}
\put(15, 14){\line(1, 0){20}}
 \put(35, 10.5){$\times$}
\put(35,2){\tiny$\alpha'_2$}
\put(42, 14){\line(1, 0){20}}
\put(65,11){$\cdots$}
\put(80, 14){\line(1, 0){20}}
\put(100, 11){$\times$}
\put(90,2){\tiny$\alpha'_{m+n-1}$}
\put(106, 15){\line(1, 0){17}}
\put(106, 13){\line(1, 0){17}}
\put(118, 10.5){$>$}
\put(130, 14){\circle{10}}
\put(123,2){\tiny$\alpha'_{m+n}$}
\end{picture}\\
\hline
(2)
&\begin{picture}(0, 25)(75, 2)
\put(10, 10.5){$\times$}
\put(10,2){\tiny$\alpha_1$}
\put(15, 14){\line(1, 0){20}}
 \put(35, 10.5){$\times$}
\put(35,2){\tiny$\alpha_2$}
\put(42, 14){\line(1, 0){20}}
\put(65,11){$\cdots$}
\put(80, 14){\line(1, 0){20}}
\put(100, 11){$\times$}
\put(90,2){\tiny$\alpha_{m+n-1}$}
\put(106, 15){\line(1, 0){17}}
\put(106, 13){\line(1, 0){17}}
\put(118, 10.5){$>$}
\put(130, 14){\circle{10}}
\put(123,2){\tiny$\alpha_{m+n}$}
\end{picture}
&
\begin{picture}(0, 25)(75, 2)
\put(10, 10.5){$\times$}
\put(10,2){\tiny$\alpha'_1$}
\put(15, 14){\line(1, 0){20}}
 \put(35, 10.5){$\times$}
\put(35,2){\tiny$\alpha'_2$}
\put(42, 14){\line(1, 0){20}}
\put(65,11){$\cdots$}
\put(80, 14){\line(1, 0){20}}
\put(100, 11){$\times$}
\put(90,2){\tiny$\alpha'_{m+n-1}$}
\put(106, 15){\line(1, 0){17}}
\put(106, 13){\line(1, 0){17}}
\put(118, 10.5){$>$}
\put(130, 14){\circle*{10}}
\put(123,2){\tiny$\alpha'_{m+n}$}
\end{picture}\\
\hline
\end{tabular}
\end{table}
%\vspace{4mm}

%\noindent

The quantum superalgebra $\UU_q(\osp(2m+1|2n), \Pi)$ is generated by
$\Uq(\osp(2m+1|2n), \Pi)$, and the elements $\sigma_i$, which generate a group $G=\Z_2^{\times (m+n)}$.
The commutation relations of the $\sigma_i$ with the generators of $\Uq(\osp(2m+1|2n), \Pi)$ are given by \eqref{eq:sigma-act}.
The quantum superalgebra $\UU_{-q}(\osp(2n|2m+1), \Pi')$ can be described similarly.

Let $t=-q$, and let $t^{1/2}$ be a square root  for $t$. Denote $t_i=t^{(\alpha_i,\alpha_i)/2}$. Note that relations among $E_i, F_i, K_i$ depend on $t^{\pm 1}$, but not  on $t^{\pm 1/2}$.

\begin{proof}[Proof of Theorem \ref{thm:iso-main} for $(\g, \g')=(\osp(2m+1|2n), \osp(2n+1|2m))$]
Consider first the Type (1) Dynkin diagrams in Table \ref{table:Dynkin diagram-B}.
In this case, $\alpha_{m+n}=\delta_{n}\in \Pi$. Let $\bar{i}=1$ if $i\in\tau'$ and $0$ otherwise. Then it is easy to see that
\begin{equation}\label{eq:EF-t}
E_i F_j - (-1)^{\bar{i}\bar{j}}F_j E_i
    =\delta_{i j} \frac{K_i -K_i^{-1}}{t ^{\theta_i}- t^{-\theta_i}}, \quad 1\le i, j\le m+n.
\end{equation}
For example,
\[\begin{aligned}
&E_{m+n}F_{m+n}\!\!-\!\!F_{m+n}E_{m+n}=-\sigma_{m+n}(e_{m+n}f_{m+n}\!+\!f_{m+n}e_{m+n})=\!\!\frac{K_{m+n}\!-\!K_{m+n}^{-1}}{t\!-\!t^{-1}}, \\
&E_kF_{m+n}-F_{m+n}E_k=(-1)^{\delta_{k,m+n-1}}\tilde{\Phi}_{k+2}\tilde{\Phi}_{m+n}(e_kf_{m+n}+f_{m+n}e_k)=0, \ \ k\in\tau'.
\end{aligned}\]
The other relations listed in $(1)$ of Definition \ref {defi:quantised} can be proved in exactly the same way.

We now consider the Serre relations $(2)$ in Definition \ref {defi:quantised}, which lead to
\begin{eqnarray}\label{eq:serre-t}
\begin{aligned}
&E_i^2=F_i^2=0, \quad \text{if $\alpha_i\in\Pi$ is isotropic},\\
&\sum_{k=0}^{1-a_{ij}}(-1)^k{\begin{bmatrix}\begin{smallmatrix} 1-a_{ij}\\k\end{smallmatrix}\end{bmatrix}}_{t_i}E_i^kE_jE_i^{1-a_{ij}-k}=0,\\
&\sum_{k=0}^{1-a_{ij}}(-1)^k{\begin{bmatrix}\begin{smallmatrix} 1-a_{ij}\\k\end{smallmatrix}\end{bmatrix}}_{t_i}F_i^kF_jF_i^{1-a_{ij}-k}=0, \quad  i\not\in\tau', \  i\ne j.
\end{aligned}
\end{eqnarray}
To see this, we consider, for example, the case $i=m+n$ and $j=m+n-1\not\in\tau$.
We have
\[
\begin{aligned}
\sum_{k=0}^3(-1)^k\begin{bmatrix} 3\\k\end{bmatrix}_{t_{m+n}}E_{m+n}^kE_{m+n-1} E_{m+n}^{3-k}
=\Phi_{m+n} \sum_{k=0}^3(-1)^{\frac{k(k+1)}{2}}\left\{\begin{matrix} 3\\k\end{matrix}\right\}_{q_{m+n}} e_{m+n}^k e_{m+n-1} e_{m+n}^{3-k}=0,
\end{aligned}
\]
where we have used
 $
\begin{bmatrix} 3\\k\end{bmatrix}_{t_{m+n}}=(-1)^{\frac{k(k+1)}{2}}\left\{\begin{matrix}3\\k\end{matrix}\right\}_{q_{m+n}}
$
for $0\leq k\leq 3$. For $m+n-1\in\tau$, the above equation remains valid if we replace the expression between the equality signs by $\sum\limits_{k=0}^3(-1)^{\frac{k(k-1)}{2}}\left\{\begin{matrix} 3\\k\end{matrix}\right\}_{q_{m+n}} e_{m+n}^k e_{m+n-1} e_{m+n}^{3-k}$.

Higher order Serre relations can arise from two types of sub-diagrams only in the present case. Denote
\begin{eqnarray}\label{eq:Uab-higher order ef}
\begin{aligned}
e_{i;s;j}=&e_i(e_se_j-(-1)^{[e_j]}q_j^{a_{js}}e_je_s)\\
		&-(-1)^{[e_i](1+[e_j])}q_i^{a_{is}+a_{ij}}(e_se_j-(-1)^{[e_j]}q_j^{a_{js}}e_je_s)e_i,\\
f_{i;s;j}=&f_i(f_sf_j-(-1)^{[f_j]}q_j^{a_{js}}f_jf_s)\\
	&-(-1)^{[f_i](1+[f_j])}q_i^{a_{is}+a_{ij}}(f_sf_j-(-1)^{ [f_j]}q_j^{a_{js}}f_jf_s)f_i.
\end{aligned}
\end{eqnarray}
\begin{case}
\begin{picture}(75, 15)(0, 7)
\put(10, 7){$\times$}
\put(15, 10.5){\line(1, 0){20}}
 \put(35, 6){\Large$\otimes$ }
\put(45, 10.5){\line(1, 0){20}}
\put(62, 7){$\times$}
\put(70, 7){,}
\put(7,0){\tiny $s-1$}
\put(38,-1){\tiny $s$}
\put(59,0){\tiny $s+1$}
\end{picture}
with the associated higher order Serre relations given by
\begin{eqnarray}\label{eq:UqB-higher order1}
\begin{aligned}
&e_s e_{s-1; s; s+1}+(-1)^{[e_{s-1}]+[e_{s+1}]}  e_{s-1; s; s+1}e_s=0,\\
&f_s f_{s-1; s; s+1} +(-1)^{[f_{s-1}]+[f_{s+1}]} f_{s-1; s; s+1}f_s=0.
\end{aligned}
\end{eqnarray}
\end{case}
\noindent

Let us first assume that $s-1,s+1\notin\tau$.
In this case, $s\neq m+n-1$, $a_{s-1,s}=a_{s+1,s}=-1$  and $q_{s+1}=q_{s-1}^{-1}$,
with $q_{s-1}=q$ or $q^{-1}$ depending on depending on
the value of  $\theta$ in \eqref{eq:bilinear form}.  Then \eqref{eq:UqB-higher order1} is given by
\begin{eqnarray*}
\begin{aligned}
e_s e_{s-1; s; s+1} +  e_{s-1; s; s+1}e_s=0,\quad
f_s f_{s-1; s; s+1} +  f_{s-1; s; s+1}f_s=0, \quad \text{with}\\
e_{s-1; s; s+1} = e_{s-1}(e_s e_{s+1}-q_{s+1}^{-1} e_{s+1} e_s)-
q_{s-1}^{-1}(e_s e_{s+1}-q_{s+1}^{-1} e_{s+1} e_s) e_{s-1},\\
f_{s-1; s; s+1} = f_{s-1}(f_s f_{s+1}-q_{s+1}^{-1} f_{s+1} f_s)-
q_{s-1}^{-1}(f_s f_{s+1}-q_{s+1}^{-1} f_{s+1} f_s) f_{s-1}.
\end{aligned}
\end{eqnarray*}
Write $t_{s\pm 1}= - q_{s\pm 1}$, and let
\[\begin{aligned}
E_{s-1;s;s+1}&:=E_{s-1}(E_sE_{s+1}-t_{s+1}^{-1}E_{s+1}E_s)-t_{s-1}^{-1}(E_sE_{s+1}- t_{s+1}^{-1} E_{s+1}E_s)E_{s-1}, \\
F_{s-1;s;s+1}&:=F_{s-1}(F_s F_{s+1}-t_{s+1}^{-1}F_{s+1}F_s)-t_{s-1}^{-1}(F_s F_{s+1}- t_{s+1}^{-1} F_{s+1}F_s)F_{s-1}.
\end{aligned}
\]
For any mutually distinct $i,j,k$ not in $\tau$,
\[
\begin{aligned}
E_iE_jE_k=(-1)^{\delta_{i,k+1}+\delta_{j,k+1}
+\delta_{i,j+1}}\Phi_{i+1}\Phi_{j+1}\Phi_{k+1}e_ie_je_k. 
\end{aligned}\]
If any one of $i, j, k$ is in $\tau$, say,  $j\in\tau$, the identity still holds if we replace $\Phi_{j+1}$ by $\tilde{\Phi}_{j+2}$.
There are also similar relations for $F$'s.  Using these facts, we obtain
\[\begin{aligned}
E_{s-1;s;s+1}&=\Phi_s\tilde{\Phi}_{s+2}\Phi_{s+2}e_{s-1;s;s+1},
\quad
F_{s-1;s;s+1}=\Phi_{s-1}\tilde{\Phi}_{s}\Phi_{s+1}f_{s-1;s;s+1}.
\end{aligned}\]
This immediately leads to
\[
\begin{aligned}
&E_sE_{s-1;s;s+1}+E_{s-1;s;s+1}E_s=-\sigma_s\sigma_{s+1}(e_se_{s-1;s;s+1}+e_{s-1;s;s+1}e_s)=0,\\
&F_sF_{s-1;s;s+1}+F_{s-1;s;s+1}F_s=-\sigma_{s-1}\sigma_{s}(f_sf_{s-1;s;s+1}+f_{s-1;s;s+1}f_s)=0.
\end{aligned}
\]
Without assuming $s-1,s+1\notin\tau$, we can still show that similar relations hold.

In summary,  for $s$ such that $\alpha_s$ is isotropic, we have
\begin{eqnarray}\label{eq:Hserre1-t}
\begin{aligned}
E_sE_{s-1;s;s+1} - (-1)^{1+\overline{s-1}+\overline{s+1}}E_{s-1;s;s+1}E_s=0, \\
F_sF_{s-1;s;s+1} - (-1)^{1+\overline{s-1}+\overline{s+1}}F_{s-1;s;s+1}F_s=0,
\end{aligned}
\end{eqnarray}
where  $\overline{s-1}$ and $\overline{s+1}$ are as in equation \eqref{eq:EF-t}.

\begin{case}
\begin{picture}(82, 15)(0, 7)
\put(10, 7){$\times$}
\put(15, 10){\line(1, 0){20}}
\put(35, 6){\Large$\otimes$}
\put(45, 11){\line(1, 0){17}}
\put(45, 9){\line(1, 0){17}}
\put(57, 6.5){$>$}
\put(70,10){\circle*{10}}
\put(79, 6){,}
\put(7,0){\tiny $s-1$}
\put(38,-1){\tiny $s$}
\put(62,-1){\tiny $s+1$}
\end{picture}
with the associated higher order Serre relations given by
\begin{eqnarray}\label{eq:UqB-higher order3}
\begin{aligned}
&e_s e_{s-1; s; s+1}-(-1)^{[e_{s-1}]}  e_{s-1; s; s+1}e_s=0,\\
&f_s f_{s-1; s; s+1} -(-1)^{[f_{s-1}]}  f_{s-1; s; s+1}f_s=0;
\end{aligned}
\end{eqnarray}
\end{case}
\noindent
where $s+1=m+n$. By the similar method above, we have
\begin{eqnarray}\label{eq:Hserre2-t}
\begin{aligned}
E_sE_{s-1;s;s+1} - (-1)^{1+\overline{s-1}}E_{s-1;s;s+1}E_s=0, \\
F_sF_{s-1;s;s+1} - (-1)^{1+\overline{s-1}}F_{s-1;s;s+1}F_s=0.
\end{aligned}
\end{eqnarray}

Note that equations, \eqref{eq:EF-t}, \eqref{eq:serre-t}, \eqref{eq:Hserre1-t}  and \eqref{eq:Hserre2-t} are
the same as the defining relations of $\U_{-q}(\g', \Pi')$ satisfied by
the generators $e'_i, f'_i, k'_i$. Thus we have shown that the map
$\UU_{-q}(\g', \Pi')\longrightarrow \UU_q(\g, \Pi)$ given by \eqref{eq:B-map}
is indeed an algebra homomorphism if $\alpha_{m+n}=\delta_n$,
i.e., in the case of the Type (1) diagrams in Table \ref{table:Dynkin diagram-B}.
Similarly we can prove this for
Type (2) diagrams in Table \ref{table:Dynkin diagram-B}, where $\alpha_{m+n}=\varepsilon_m$.
\end{proof}

\subsection{The case of  $\Sl(2m+1|2n)^{(2)}$ and $\osp(2n+1|2m)^{(1)}$ }\label{sect:sl-osp}
\setcounter{case}{0}
Given a fundamental system $\Pi=\{\alpha_i\mid i=0, 1, \dots, m+n\}$ of  $\g=\Sl(2m+1|2n)^{(2)}$, we obtain a corresponding fundamental system $\Pi'=\{\alpha'_i=\phi(\alpha_i)\mid i=0, 1, \dots, m+n\}$ of  $\g'=\osp(2n+1|2m)^{(1)}$ by Lemma \ref{lem:connection phi}. We draw the Dynkin diagrams for $\Pi$ and $\Pi'$ in a row of Table \ref{table:Dynkin diagram-sl2}, with the diagram for $\Pi$ on the left. The Dynkin diagrams corresponding to different choices of fundamental systems are divided into four types in the table.
\begin{table}[h]
\caption{Dynkin diagrams of $\Sl(2m+1|2n)^{(2)}$ and $\osp(2n+1|2m)^{(1)}$}
\label{table:Dynkin diagram-sl2}
\begin{tabular}{ >{\centering\arraybackslash}m{0.4in} | >{\centering\arraybackslash}m{2.5in}|  >{\centering\arraybackslash}m{2.5in}  }
\hline
Type  &\vspace{3mm} $\g=\Sl(2m+1|2n)^{(2)}$ \vspace{3mm} & $\g'=\osp(2n+1|2m)^{(1)}$\\
\hline
\multirow{3}{*}{(1)}
&\begin{picture}(180, 28)(-5,-12)
\put(7,0){\circle{10}}
\put(5,-12){\tiny\mbox{$\alpha_0$}}
\put(13,1){\line(1, 0){17}}
\put(13,-1){\line(1, 0){17}}
\put(28,-3){$>$}
\put(40, 0){\circle{10}}
\put(38,-12){\tiny\mbox{$\alpha_1$}}
\put(45, 0){\line(1, 0){20}}
\put(64,-3){$\times$}
\put(70, 0){\line(1, 0){20}}
\put(91, -0.5){\dots}
\put(105, 0){\line(1, 0){20}}
\put(124, -3){$\times$}
\put(130,1){\line(1, 0){17}}
\put(130,-1){\line(1, 0){17}}
\put(143,-3){$>$}
\put(155, 0){\circle*{10}}
\put(150,-12){\mbox{\tiny$\alpha_{m+n}$}}
\end{picture}
& \begin{picture}(180, 28)(-5,-12)
\put(7,0){\circle{10}}
\put(5,-12){\tiny\mbox{$\alpha'_0$}}
\put(13,1){\line(1, 0){17}}
\put(13,-1){\line(1, 0){17}}
\put(28,-3){$>$}
\put(40, 0){\circle{10}}
\put(38,-12){\tiny\mbox{$\alpha'_1$}}
\put(45, 0){\line(1, 0){20}}
\put(64,-3){$\times$}
\put(70, 0){\line(1, 0){20}}
\put(91, -0.5){\dots}
\put(105, 0){\line(1, 0){20}}
\put(124, -3){$\times$}
\put(130,1){\line(1, 0){17}}
\put(130,-1){\line(1, 0){17}}
\put(143,-3){$>$}
\put(155, 0){\circle{10}}
\put(150,-12){\mbox{\tiny$\alpha'_{m+n}$}}
\end{picture}  \\

\cline{2-3}
&\begin{picture}(180, 28)(-5,-12)
\put(7,0){\circle{10}}
\put(5,-12){\tiny$\alpha_0$}
\put(13,1){\line(1, 0){17}}
\put(13,-1){\line(1, 0){17}}
\put(28,-3){$>$}
\put(40, 0){\circle{10}}
\put(38,-12){\tiny$\alpha_1$}
\put(45, 0){\line(1, 0){20}}
\put(64,-3){$\times$}
\put(70, 0){\line(1, 0){20}}
\put(91, -0.5){\dots}
\put(105, 0){\line(1, 0){20}}
\put(124, -3){$\times$}
\put(130,1){\line(1, 0){17}}
\put(130,-1){\line(1, 0){17}}
\put(143,-3){$>$}
\put(155, 0){\circle{10}}
\put(150,-12){\tiny$\alpha_{m+n}$}
\end{picture}
&\begin{picture}(180, 28)(-5,-12)
\put(7,0){\circle{10}}
\put(5,-12){\tiny\mbox{$\alpha'_0$}}
\put(13,1){\line(1, 0){17}}
\put(13,-1){\line(1, 0){17}}
\put(28,-3){$>$}
\put(40, 0){\circle{10}}
\put(38,-12){\tiny\mbox{$\alpha'_1$}}
\put(45, 0){\line(1, 0){20}}
\put(64,-3){$\times$}
\put(70, 0){\line(1, 0){20}}
\put(91, -0.5){\dots}
\put(105, 0){\line(1, 0){20}}
\put(124, -3){$\times$}
\put(130,1){\line(1, 0){17}}
\put(130,-1){\line(1, 0){17}}
\put(143,-3){$>$}
\put(155, 0){\circle*{10}}
\put(150,-12){\mbox{\tiny$\alpha'_{m+n}$}}
\end{picture}  \\

\hline

\multirow{3}{*}{(2)}
& \begin{picture}(180, 28)(-10,-12)
\put(7,0){\circle{10}}
\put(5,-12){\tiny$\alpha_0$}
\put(11,-3){$<$}
\put (24,4){\tiny $2$}
\put(14,0){\line(1, 0){22}}
\put(37,-5){\Large$\otimes$}
\put(38,-12){\tiny$\alpha_1$}
\put(47,0){\line(1, 0){22}}
\put(65,-3){$>$}
\put(75, -0.5){\dots}
\put(91,0){\line(1, 0){20}}
\put(110, -3){$\times$}
\put(116,1){\line(1, 0){17}}
\put(116,-1){\line(1, 0){17}}
\put(130,-3){$>$}
\put(142, 0){\circle*{10}}
\put(136,-12){\mbox{\tiny$\alpha_{m+n}$}}
\end{picture}
& \begin{picture}(180, 28)(-10,-12)
\put(7,0){\circle{10}}
\put(5,-12){\tiny$\alpha'_0$}
\put(11,-3){$<$}
\put (24,4){\tiny $2$}
\put(14,0){\line(1, 0){22}}
\put(37,-5){\Large$\otimes$}
\put(38,-12){\tiny$\alpha'_1$}
\put(47,0){\line(1, 0){22}}
\put(65,-3){$>$}
\put(75, -0.5){\dots}
\put(91,0){\line(1, 0){20}}
\put(110, -3){$\times$}
\put(116,1){\line(1, 0){17}}
\put(116,-1){\line(1, 0){17}}
\put(130,-3){$>$}
\put(142, 0){\circle{10}}
\put(136,-12){\mbox{\tiny$\alpha'_{m+n}$}}
\end{picture}  \\

\cline{2-3}
& \begin{picture}(180, 28)(-10,-12)
\put(7,0){\circle{10}}
\put(5,-12){\tiny$\alpha_0$}
\put(11,-3){$<$}
\put (24,4){\tiny $2$}
\put(14,0){\line(1, 0){22}}
\put(37,-5){\Large$\otimes$}
\put(38,-12){\tiny$\alpha_1$}
\put(47,0){\line(1, 0){22}}
\put(65,-3){$>$}
\put(75, -0.5){\dots}
\put(91,0){\line(1, 0){20}}
\put(110, -3){$\times$}
\put(116,1){\line(1, 0){17}}
\put(116,-1){\line(1, 0){17}}
\put(130,-3){$>$}
\put(142, 0){\circle{10}}
\put(136,-12){\mbox{\tiny$\alpha_{m+n}$}}
\end{picture}
& \begin{picture}(180, 28)(-10,-12)
\put(7,0){\circle{10}}
\put(5,-12){\tiny$\alpha'_0$}
\put(11,-3){$<$}
\put (24,4){\tiny $2$}
\put(14,0){\line(1, 0){22}}
\put(37,-5){\Large$\otimes$}
\put(38,-12){\tiny$\alpha'_1$}
\put(47,0){\line(1, 0){22}}
\put(65,-3){$>$}
\put(75, -0.5){\dots}
\put(91,0){\line(1, 0){20}}
\put(110, -3){$\times$}
\put(116,1){\line(1, 0){17}}
\put(116,-1){\line(1, 0){17}}
\put(130,-3){$>$}
\put(142, 0){\circle*{10}}
\put(136,-12){\mbox{\tiny$\alpha'_{m+n}$}}
\end{picture}\\
\hline
\multirow{6}{*}{(3)}
&\begin{picture}(180, 60)(-26,-28)
\put(0, 15){\circle{10}}
\put(-5,23){\tiny$\alpha_0$}
\put(0, -16){\circle{10}}
\put(-4,-28){\tiny$\alpha_1$}
\put(15, -3){\line(-1, -1){10}}
\put(15, 3){\line(-1, 1){10}}
\put(16, -3){$\times$}
\put(24, 0){\line(1, 0){20}}
\put(46, -0.5){\dots}
\put(60,0){\line(1, 0){20}}
\put(79, -3){$\times$}
\put(86,1){\line(1, 0){17}}
\put(86,-1){\line(1, 0){17}}
\put(100,-3){$>$}
\put(112, 0){\circle*{10}}
\put(106,-15){\tiny$\alpha_{m+n}$}
\end{picture}
& \begin{picture}(180, 60)(-26,-28)
\put(0, 15){\circle{10}}
\put(-4,24){\tiny$\alpha'_0$}
\put(0, -16){\circle{10}}
\put(-4,-28){\tiny$\alpha'_1$}
\put(15, -3){\line(-1, -1){10}}
\put(15, 3){\line(-1, 1){10}}
\put(16, -3){$\times$}
\put(24, 0){\line(1, 0){20}}
\put(46, -0.5){\dots}
\put(60,0){\line(1, 0){20}}
\put(79, -3){$\times$}
\put(86,1){\line(1, 0){17}}
\put(86,-1){\line(1, 0){17}}
\put(100,-3){$>$}
\put(112, 0){\circle{10}}
\put(106,-15){\tiny$\alpha'_{m+n}$}
\end{picture} \\
\cline{2-3}
&\begin{picture}(180, 60)(-26,-28)
\put(0, 15){\circle{10}}
\put(-5,23){\tiny$\alpha_0$}
\put(0, -16){\circle{10}}
\put(-4,-28){\tiny$\alpha_1$}
\put(15, -3){\line(-1, -1){10}}
\put(15, 3){\line(-1, 1){10}}
\put(16, -3){$\times$}
\put(24, 0){\line(1, 0){20}}
\put(46, -0.5){\dots}
\put(60,0){\line(1, 0){20}}
\put(79, -3){$\times$}
\put(86,1){\line(1, 0){17}}
\put(86,-1){\line(1, 0){17}}
\put(100,-3){$>$}
\put(112, 0){\circle{10}}
\put(106,-15){\tiny$\alpha_{m+n}$}
\end{picture}
&\begin{picture}(180, 60)(-26,-28)
\put(0, 15){\circle{10}}
\put(-4,24){\tiny$\alpha'_0$}
\put(0, -16){\circle{10}}
\put(-4,-28){\tiny$\alpha'_1$}
\put(15, -3){\line(-1, -1){10}}
\put(15, 3){\line(-1, 1){10}}
\put(16, -3){$\times$}
\put(24, 0){\line(1, 0){20}}
\put(46, -0.5){\dots}
\put(60,0){\line(1, 0){20}}
\put(79, -3){$\times$}
\put(86,1){\line(1, 0){17}}
\put(86,-1){\line(1, 0){17}}
\put(100,-3){$>$}
\put(112, 0){\circle*{10}}
\put(106,-15){\tiny$\alpha'_{m+n}$}
\end{picture} \\
\hline
\multirow{6}{*}{(4)}
&\begin{picture}(180, 63)(-26,-26)
\put(0, 13){\Large$\otimes$}
\put(2,26){\tiny$\alpha_0$}
\put(0, -17){\Large$\otimes$}
\put(2,-26){\tiny$\alpha_1$}
\put(4,-7){\line(0, 1){20}}
\put(6,-7){\line(0, 1){20}}
\put(20, -2){\line(-1, -1){10}}
\put(20, 3){\line(-1, 1){10}}
\put(20, -3){$\times$}
\put(26, 0){\line(1, 0){20}}
\put(50, -0.5){\dots}
\put(65,0){\line(1, 0){20}}
\put(83, -3){$\times$}
\put(90,1){\line(1, 0){17}}
\put(90,-1){\line(1, 0){17}}
\put(105,-3){$>$}
\put(117, 0){\circle*{10}}
\put(110,-15){\tiny$\alpha_{m+n}$}
\end{picture}
&\begin{picture}(180, 63)(-26,-26)
\put(0, 13){\Large$\otimes$}
\put(2,26){\tiny$\alpha'_0$}
\put(0, -17){\Large$\otimes$}
\put(2,-26){\tiny$\alpha'_1$}
\put(4,-7){\line(0, 1){20}}
\put(6,-7){\line(0, 1){20}}
\put(20, -2){\line(-1, -1){10}}
\put(20, 3){\line(-1, 1){10}}
\put(20, -3){$\times$}
\put(26, 0){\line(1, 0){20}}
\put(50, -0.5){\dots}
\put(65,0){\line(1, 0){20}}
\put(83, -3){$\times$}
\put(90,1){\line(1, 0){17}}
\put(90,-1){\line(1, 0){17}}
\put(105,-3){$>$}
\put(117, 0){\circle{10}}
\put(110,-15){\tiny$\alpha'_{m+n}$}
\end{picture} \\
\cline{2-3}
&\begin{picture}(180, 63)(-26,-26)
\put(0, 13){\Large$\otimes$}
\put(2,26){\tiny$\alpha_0$}
\put(0, -17){\Large$\otimes$}
\put(2,-26){\tiny$\alpha_1$}
\put(4,-7){\line(0, 1){20}}
\put(6,-7){\line(0, 1){20}}
\put(20, -2){\line(-1, -1){10}}
\put(20, 3){\line(-1, 1){10}}
\put(20, -3){$\times$}
\put(26, 0){\line(1, 0){20}}
\put(50, -0.5){\dots}
\put(65,0){\line(1, 0){20}}
\put(83, -3){$\times$}
\put(90,1){\line(1, 0){17}}
\put(90,-1){\line(1, 0){17}}
\put(105,-3){$>$}
\put(117, 0){\circle{10}}
\put(110,-15){\tiny$\alpha_{m+n}$}
\end{picture}
&\begin{picture}(180, 63)(-26,-26)
\put(0, 13){\Large$\otimes$}
\put(2,26){\tiny$\alpha'_0$}
\put(0, -17){\Large$\otimes$}
\put(2,-26){\tiny$\alpha'_1$}
\put(4,-7){\line(0, 1){20}}
\put(6,-7){\line(0, 1){20}}
\put(20, -2){\line(-1, -1){10}}
\put(20, 3){\line(-1, 1){10}}
\put(20, -3){$\times$}
\put(26, 0){\line(1, 0){20}}
\put(50, -0.5){\dots}
\put(65,0){\line(1, 0){20}}
\put(83, -3){$\times$}
\put(90,1){\line(1, 0){17}}
\put(90,-1){\line(1, 0){17}}
\put(105,-3){$>$}
\put(117, 0){\circle*{10}}
\put(110,-15){\tiny$\alpha'_{m+n}$}
\end{picture} \\
\hline
\end{tabular}
\end{table}

\begin{proof}[Proof of Theorem \ref{thm:iso-main} for $(\g, \g')=(\Sl(2m+1|2n)^{(2)}, \osp(2n+1|2m)^{(1)})$] For $1\le i\le m+n$, we define $E_i, F_i, K_i$, $E'_i, F'_i, K'_i$   by \eqref{eq:connect B}.
Note that when the nodes of $\alpha_0$ and $\alpha'_0$ are removed,
the Dynkin diagrams in Table \ref{table:Dynkin diagram-sl2}  reduce to the Dynkin diagrams for the finite dimensional Lie superalgebras $\osp(2m+1|2n)$ and $\osp(2n+1|2m)$.
Thus by \eqref{eq:connect B}, the same reasoning in Section \ref{sect:osp} can show
that the elements
$E_i, F_i, K_i$ (resp. $E'_i, F'_i, K'_i$)  for $1\le i\le m+n$ have
the desired properties.

What remains to be done, in order to complete the proof of
Theorem \ref{thm:iso-main},   is to construct elements
$E_0, F_0, K_0\in \UU_q(\g, \Pi)$ (resp. $E'_0, F'_0, K'_0\in \UU_t(\g', \Pi')$),
which satisfy the commutation relations obeyed by $e'_0, f'_0, k'_0$ (resp $e_0, f_0, k_0$).  We do this for each of the four types of diagrams in
Table \ref{table:Dynkin diagram-sl2}.
The proofs for $E_0, F_0, K_0$  and  for $E'_0, F'_0, K'_0$ are similar to those in  Section \ref{sect:osp}, thus we will give the constructions for these elements only.

\begin{case}
{\em Type (1) and  Type (2)  Dynkin diagrams  in Table \ref{table:Dynkin diagram-sl2}}.
\begin{equation}\label{eq:connect AB-case1}
\begin{aligned}
&\quad E_0=e_0,\quad F_0=f_0, \quad K_0=k_0, \text{ and,}\ \ E'_0=e'_0,\quad F'_0=f'_0, \quad K'_0=k'_0.
\end{aligned}
\end{equation}
\end{case}
Note that for Type (2) Dynkin diagrams with $m+n>2$, the higher order Serre relations  involving $e_0$ or $f_0$ are either of type (D) (for $2\in\tau$) or (E) (for $2\notin\tau$)  (see Definition \ref{defi:quantised}).  

\begin{case}
{\em Type (3) Dynkin diagrams in Table \ref{table:Dynkin diagram-sl2}}.
In this case,
\begin{eqnarray}\label{eq:connect AB-case3}
\begin{aligned}
&E_0=\Phi_{2}e_0,\quad F_0=\Phi_1 f_0, \quad K_0=\sigma_1k_0,\\
&E'_0=\Phi'_{2}e'_0,\quad F'_0=\Phi'_1 f'_0, \quad K'_0=\sigma'_1k'_0.
\end{aligned}
\end{eqnarray}
\end{case}

\begin{case}
{\em Type (4) Dynkin diagrams in Table \ref{table:Dynkin diagram-sl2}}.
%\end{case}
This time $\alpha_0$ is an odd simple root. Define
\begin{eqnarray}\label{eq:connect AB-case4}
\begin{aligned}
&E_0=\tilde{\Phi}_{3}e_0,\quad \ \ F_0= \tilde{\Phi}_1 f_0,\quad K_0=\sigma_1k_0,\\
&E'_0=\tilde{\Phi}'_{3}e'_0,\quad \ \ F'_0= \tilde{\Phi}'_1 f'_0,\quad K'_0=\sigma'_1k'_0.
\end{aligned}
\end{eqnarray}
\end{case}
\end{proof}

\subsection{The case of  $\osp(2m+2|2n)^{(2)}$ and $\osp(2n+2|2m)^{(2)}$}\label{sect:osp2}
\setcounter{case}{0}
The Dynkin diagrams of $\g=\osp(2m+2|2n)^{(2)}$ and $\g'=\osp(2n+2|2m)^{(2)}$ are  given in Table \ref{table:Dynkin diagram-osp2},
where the diagrams for a fundamental system $\Pi$ of $\g$ and the corresponding fundamental system  $\Pi'$ of $\g'$ are shown in the same row.

\begin{table}[!htbp]
\caption{Dynkin diagrams of $\osp(2m+2|2n)^{(2)}$ and $\osp(2n+2|2m)^{(2)}$}
\label{table:Dynkin diagram-osp2}
\begin{tabular}{  >{\centering\arraybackslash}m{0.4in} |>{\centering\arraybackslash}m{2.2in}|   >{\centering\arraybackslash}m{2.2in}  }
\hline
Type
&\vspace{3mm} $\g=\osp(2m+2|2n)^{(2)}$ \vspace{2mm} & $\g'=\osp(2n+2|2m)^{(2)}$\\
\hline
\multirow{3}{*}{(1)}
&\begin{picture}(150, 30)(-6,-14)
\put(7,0){\circle{10}}
\put(3,-12){\tiny $\alpha_0$}
\put(16,1){\line(1, 0){18}}
\put(16,-1){\line(1, 0){18}}
\put(12,-3){$<$}
\put(34, -3){$\times$}
\put(40, 0){\line(1, 0){20}}
\put(61, -0.5){\dots}
\put(75, 0){\line(1, 0){20}}
\put(94, -3){$\times$}
\put(100,1){\line(1, 0){17}}
\put(100,-1){\line(1, 0){17}}
\put(113,-3){$>$}
\put(125, 0){\circle*{10}}
\put(115,-12){ \tiny$\alpha_{m+n}$}
\end{picture}
&\begin{picture}(150, 30)(-10,-14)
\put(7,0){\circle*{10}}
\put(3,-12){\tiny $\alpha'_0$}
\put(16,1){\line(1, 0){18}}
\put(16,-1){\line(1, 0){18}}
\put(12,-3){$<$}
\put(34, -3){$\times$}
\put(40, 0){\line(1, 0){20}}
\put(61, -0.5){\dots}
\put(75, 0){\line(1, 0){20}}
\put(94, -3){$\times$}
\put(100,1){\line(1, 0){17}}
\put(100,-1){\line(1, 0){17}}
\put(113,-3){$>$}
\put(125, 0){\circle{10}}
\put(115,-12){ \tiny$\alpha'_{m+n}$}
\end{picture} \\
\cline{2-3}
&\begin{picture}(150, 30)(-6,-14)
\put(7,0){\circle{10}}
\put(3,-12){\tiny $\alpha_0$}
\put(16,1){\line(1, 0){18}}
\put(16,-1){\line(1, 0){18}}
\put(12,-3){$<$}
\put(34, -3){$\times$}
\put(40, 0){\line(1, 0){20}}
\put(61, -0.5){\dots}
\put(75, 0){\line(1, 0){20}}
\put(94, -3){$\times$}
\put(100,1){\line(1, 0){17}}
\put(100,-1){\line(1, 0){17}}
\put(113,-3){$>$}
\put(125, 0){\circle{10}}
\put(115,-12){ \tiny$\alpha_{m+n}$}
\end{picture}
&\begin{picture}(150, 30)(-10,-14)
\put(7,0){\circle*{10}}
\put(3,-12){\tiny $\alpha'_0$}
\put(16,1){\line(1, 0){18}}
\put(16,-1){\line(1, 0){18}}
\put(12,-3){$<$}
\put(34, -3){$\times$}
\put(40, 0){\line(1, 0){20}}
\put(61, -0.5){\dots}
\put(75, 0){\line(1, 0){20}}
\put(94, -3){$\times$}
\put(100,1){\line(1, 0){17}}
\put(100,-1){\line(1, 0){17}}
\put(113,-3){$>$}
\put(125, 0){\circle*{10}}
\put(115,-12){ \tiny$\alpha'_{m+n}$}
\end{picture} \\
\hline
\multirow{3}{*}{(2)}
&\begin{picture}(150, 30)(-6,-14)
\put(7,0){\circle*{10}}
\put(3,-12){\tiny $\alpha_0$}
\put(16,1){\line(1, 0){18}}
\put(16,-1){\line(1, 0){18}}
\put(12,-3){$<$}
\put(34, -3){$\times$}
\put(40, 0){\line(1, 0){20}}
\put(61, -0.5){\dots}
\put(75, 0){\line(1, 0){20}}
\put(94, -3){$\times$}
\put(100,1){\line(1, 0){17}}
\put(100,-1){\line(1, 0){17}}
\put(113,-3){$>$}
\put(125, 0){\circle*{10}}
\put(115,-12){ \tiny$\alpha_{m+n}$}
\end{picture}
&\begin{picture}(150, 30)(-10,-14)
\put(7,0){\circle{10}}
\put(3,-12){\tiny $\alpha'_0$}
\put(16,1){\line(1, 0){18}}
\put(16,-1){\line(1, 0){18}}
\put(12,-3){$<$}
\put(34, -3){$\times$}
\put(40, 0){\line(1, 0){20}}
\put(61, -0.5){\dots}
\put(75, 0){\line(1, 0){20}}
\put(94, -3){$\times$}
\put(100,1){\line(1, 0){17}}
\put(100,-1){\line(1, 0){17}}
\put(113,-3){$>$}
\put(125, 0){\circle{10}}
\put(115,-12){ \tiny$\alpha'_{m+n}$}
\end{picture} \\
\cline{2-3}
&\begin{picture}(150, 30)(-6,-14)
\put(7,0){\circle*{10}}
\put(3,-12){\tiny $\alpha_0$}
\put(16,1){\line(1, 0){18}}
\put(16,-1){\line(1, 0){18}}
\put(12,-3){$<$}
\put(34, -3){$\times$}
\put(40, 0){\line(1, 0){20}}
\put(61, -0.5){\dots}
\put(75, 0){\line(1, 0){20}}
\put(94, -3){$\times$}
\put(100,1){\line(1, 0){17}}
\put(100,-1){\line(1, 0){17}}
\put(113,-3){$>$}
\put(125, 0){\circle{10}}
\put(115,-12){ \tiny$\alpha_{m+n}$}
\end{picture}
&\begin{picture}(150, 30)(-10,-14)
\put(7,0){\circle{10}}
\put(3,-12){\tiny $\alpha'_0$}
\put(16,1){\line(1, 0){18}}
\put(16,-1){\line(1, 0){18}}
\put(12,-3){$<$}
\put(34, -3){$\times$}
\put(40, 0){\line(1, 0){20}}
\put(61, -0.5){\dots}
\put(75, 0){\line(1, 0){20}}
\put(94, -3){$\times$}
\put(100,1){\line(1, 0){17}}
\put(100,-1){\line(1, 0){17}}
\put(113,-3){$>$}
\put(125, 0){\circle*{10}}
\put(115,-12){ \tiny$\alpha'_{m+n}$}
\end{picture} \\
\hline
\end{tabular}
\end{table}

\begin{proof}[Proof of Theorem \ref{thm:iso-main}  for $(\g, \g')=
 (\osp(2m+2|2n)^{(2)}, \osp(2n+2|2m)^{(2)})$]
We will merely construct the elements $E_i, F_i, K_i, E'_i, F'_i, K'_i$ here, as
the proof of Theorem \ref{thm:iso-main} is much the same as in the previous  cases.
For $1\leq i\leq m+n$, the elements $E_i, F_i, K_i, E'_i, F'_i, K'_i$ are given by
\eqref{eq:connect B}; and for $i=0$, they are defined as follows.
\begin{case}
{\em Type (1) Dynkin diagrams in Table \ref{table:Dynkin diagram-osp2}}.
\begin{eqnarray}\label{eq:connect D-case1}
\begin{aligned}
&E_0=\tilde{\Phi}_2\prod_{j\in\tau}(\tilde{\Phi}_1\tilde{\Phi}_{j+1})\cdot e_0,\quad F_0=\tilde{\Phi}_1\prod_{j\in\tau}(\tilde{\Phi}_1\tilde{\Phi}_{j+1})\cdot f_0,\quad K_0=\Phi_1\cdot k_0;\\
&E'_0=\tilde{\Phi}'_2\prod_{j\in\tau'}(\tilde{\Phi}'_1\tilde{\Phi}'_{j+1})\cdot e'_0,\quad F'_0= \tilde{\Phi}'_1\prod_{j\in\tau'}(\tilde{\Phi}'_1\tilde{\Phi}'_{j+1})\cdot f'_0, \quad K'_0=\Phi'_1\cdot k'_0.
\end{aligned}
\end{eqnarray}
In this case, $0\notin\tau, m+n\in\tau$ while $0\in\tau', m+n\notin\tau'$.
\end{case}

\begin{case}
{\em Type (2) Dynkin diagrams in Table \ref{table:Dynkin diagram-osp2}}.
\begin{equation}\label{eq:connect D-case2}
\begin{aligned}
&E_0=\Phi_1\prod_{j\in\tau}(\tilde{\Phi}_1\tilde{\Phi}_{j+1})\cdot e_0,\quad F_0=\prod_{j\in\tau}(\tilde{\Phi}_1\tilde{\Phi}_{j+1})\cdot f_0,\quad K_0=\Phi_1\cdot k_0;\\
&E'_0=\tilde{\Phi}'_1\prod_{j\in\tau'}(\tilde{\Phi}'_1\tilde{\Phi}'_{j+1})\cdot e'_0,\quad F'_0=\prod_{j\in\tau'}(\tilde{\Phi}'_1\tilde{\Phi}'_{j+1})\cdot f'_0,\quad K'_0=\Phi'_1\cdot k'_0.
\end{aligned}
\end{equation}
In this case, $0\in\tau$ and $0\notin\tau'$.
\end{case}
\end{proof}

\section{Hopf superalgebra isomorphisms}\label{sect:tensor-cats}

We will prove Theorem \ref{them:hopf-connect} in this section. We begin by discussing some facts on Hopf superalgebras, which are needed presently,
but are not expected to be widely known.
\subsection{Picture changes and Drinfeld twists for Hopf superalgebras}\label{sect:Hopf}
\subsubsection{Picture changes}\label{sect:picture}

The category of vector superspaces can be regarded as the category of representations of the group algebra of $\Z_2:=\{1, u\}$ where $u^2=1$, which is a triangular Hopf algebra with the universal $R$-matrix
\[
R :=\frac{1}{2}\left(1 \otimes 1 + 1  \otimes u + u  \otimes 1- u  \otimes u\right) \in \C[\Z_2]  \otimes\C[\Z_2].
\]
A Hopf superalgebra $\cH$ is then a Hopf algebra in this category. The grading of $\cH$ is given by the $\Z_2$-action such that
\[
u. a = (-1)^{[a]} a,
\]
for any homogeneous $a\in \cH$. For any  $a, b\in \cH$, if we write their co-products as
$\Delta(a) = \sum a_{(1)}\otimes a_{(2)}$ and $\Delta(b) = \sum b_{(1)}\otimes b_{(2)}$ respectively, then $\Delta(a b)$ is given by
\[
\begin{aligned}
\Delta(a b) &= (m\otimes m)\left(\sum a_{(1)}\otimes \tau R(a_{(2)}\otimes b_{(1)})\otimes b_{(2)}\right),
\end{aligned}
\]
where $m$ is the multiplication of $\cH$, and $\tau: v\otimes w\mapsto w\otimes v$ is the usual permutation map (without signs).  Then clearly
\[
\begin{aligned}
\Delta(a b)
&= \sum (-1)^{[b_{(1)})][a_{(2)}]}a_{(1)}b_{(1)}\otimes a_{(2)}b_{(2)}.
\end{aligned}
\]

By changing the category of $\Z_2$-representations one obtains a non-isomorphic Hopf superalgebra from any given one,
such that its category of representations is equivalent to that of the original Hopf superalgebra as tensor category, see
\cite[Theorem 3.1.1]{AEG}  and \cite[ Chapter 10.1]{Ma}.
 Let us describe this more explicitly.

{\bf PC1} (\cite[Theorem 3.1.1]{AEG}).
Let $(H, \Delta, \epsilon, S)$ be an ordinary Hopf algebra with a group like element $u$ such that $u^2 = 1$. Using $u$, we decompose $H$ as a vector space into
$H=H_0\oplus H_1$ with
\begin{eqnarray}\label{eq:PO}
H_i=\left\{x\in H\mid u x u^{-1} = (-1)^i x\right\}.
\end{eqnarray}
This clearly defines a $\Z_2$-grading for $H$ as an associative algebra, thus turning it into an superalgebra.  We set $[x]=i$ for $x\in H_i$.
For any $x\in H$, write $\Delta(x)=\Delta_0(x) + \Delta_1(x)$ with  $\Delta_0(x) \in H\otimes H_0$ and $\Delta_1(x)\in H\otimes H_1$.  Define maps
\begin{eqnarray}\label{eq:corresp}
\begin{aligned}
&\Delta_u: H\longrightarrow H\otimes H, &&\quad \Delta_u(x)=\Delta_0(x) + \Delta_1(x)(u\otimes 1), \\
&S_u: H\longrightarrow H, &&\quad S_u(x) = u^{[x]} S(x).
\end{aligned}
\end{eqnarray}
Then $(H, \Delta_u, \epsilon, S_u)$ is a Hopf superalgebra. The
element $u$ acts as the parity operator (PO) of this Hopf superalgebra in the sense of \eqref{eq:PO}.

{\bf PC2} (\cite[Theorem 3.1.1]{AEG}).
Let $(\cH, \Delta, \epsilon, S)$ be a Hopf superalgebra with a group like element $g$ satisfying $g^2=1$, which acts as the parity operator in the sense that
$g xg^{-1} =(-1)^{[x]} x$ for all homogeneous $x\in \cH$. We define maps
$\Delta_g: \cH\longrightarrow \cH\otimes \cH$ and $S_g: \cH\longrightarrow \cH$ in exactly the same way as in \eqref{eq:corresp}. Then $(\cH, \Delta_g, \epsilon, S_g)$ is an ordinary Hopf algebra.

{\bf PC}. Let $(\cH, \Delta, \epsilon, S)$ be a Hopf superalgebra. Suppose that it has two group like elements $g$ and $u$  such that
\[
g^2=1=u^2, \quad g u=u g,\quad \text{and $g$ acts as the parity operator}.
\]
We apply {\bf PC2} to obtain an ordinary Hopf algebra, and then apply {\bf PC1} with $u$ to the ordinary Hopf algebra to obtain a new Hopf superalgebra with parity operator $u$:
\begin{eqnarray*}%\label{eq:PC}
\begin{array}{c}
\text{Hopf superalgebra}\\
(\cH, \Delta, \epsilon, S)\\
\text{with PO $g$}
\end{array}
\stackrel{PC2}{\longsquiggly}
\begin{array}{c}
\text{Hopf algebra}\\
(\cH, \Delta_g, \epsilon, S_g)\\
\text{with $u$}
\end{array}
\stackrel{PC1}{\longsquiggly}
\begin{array}{c}
\text{Hopf superalgebra}\\
(\cH, (\Delta_g)_u, \epsilon, (S_g)_u)\\
\text{with PO $u$}.
\end{array}
\end{eqnarray*}

\begin{definition} \label{def:PC} Call the operation of constructing the new Hopf superalgebra $(\cH, (\Delta_g)_u, \epsilon, (S_g)_u)$ with parity operator $u$ from a given Hopf superalgebra
$(\cH, \Delta, \epsilon, S)$ with parity operator $g$  a {\em picture change} ({\bf PC}) with respect to $g$ and $u$.
\end{definition}

\begin{remark}\label{rem:bosonisation}
This is loosely called ``bosonisation'' in the literature (see \cite{Ma} in particular).
As bosonisation means something very different in quantum field theory, we prefer the term ``picture change''.
\end{remark}

%
%\subsubsection{Tensor equivalences of representation categories}
%

Representation categories of Hopf algebras and Hopf superalgebras are strict tensor categories.
For any $\cH$-modules $M$ and $N$,  the $\Z_2$-graded action of $\Delta(x)$ and  the ordinary action of $\Delta_g(x)$ on $M\otimes N$ coincide for all $x\in \cH$.
This in essence implies the tensor equivalence of the representation categories of the Hopf
superalgebra $(\cH, \Delta, \epsilon, S)$ and the ordinary Hopf algebra $(\cH, \Delta_g, \epsilon, S_g)$ related by {\bf PC2}.  Similarly one can show the
tensor equivalence of the representation categories of the Hopf
algebra $(H, \Delta, \epsilon, S)$ and Hopf superalgebra $(H, \Delta_u, \epsilon, S_u)$
related by {\bf PC1}. See  \cite[Theorem 3.1.1]{AEG}.

We summarise the above into the following

\begin{theorem} \label{thm:PC} Let $(\cH, \Delta, \epsilon, S)$ be a Hopf superalgebra with group like elements $g$ and $u$ as described above such that $g$ acts as the parity operator.
Then a picture change turns this Hopf superalgebra into a new Hopf superalgebra
$(\cH, (\Delta_g)_u, \epsilon, (S_g)_u)$ with parity operator $u$. The categories of representations of the two Hopf superalgebras are equivalent as strict tensor categories.
\end{theorem}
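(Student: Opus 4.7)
The plan is to reduce Theorem \ref{thm:PC} to the two already-cited results \textbf{PC1} and \textbf{PC2} from \cite[Theorem 3.1.1]{AEG} by verifying that the hypotheses each of them requires remain valid under composition, and then to deduce the tensor equivalence as the composition of the corresponding intermediate equivalences.

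First I would apply \textbf{PC2} with respect to $g$ to pass from the Hopf superalgebra $(\cH, \Delta, \epsilon, S)$ to the ordinary Hopf algebra $(\cH, \Delta_g, \epsilon, S_g)$. The key verification at this step is that the element $u$, which is group-like in $\cH$ and satisfies $u^2 = 1$ and $g u = u g$, remains a group-like element of the new coalgebra structure. Since $u$ commutes with $g$, the relation $g u g^{-1} = u$ places $u$ in the degree-zero part $\cH_0$ of the $\Z_2$-grading induced by $g$ via \eqref{eq:PO}; hence $\Delta_1(u) = 0$ and therefore $\Delta_g(u) = \Delta_0(u) = \Delta(u) = u \otimes u$. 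Thus $u$ is still group-like, with $u^2 = 1$, in the Hopf algebra $(\cH, \Delta_g, \epsilon, S_g)$, so the hypotheses of \textbf{PC1} are satisfied.

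Next I would apply \textbf{PC1} with respect to $u$ to obtain the Hopf superalgebra $(\cH, (\Delta_g)_u, \epsilon, (S_g)_u)$. By construction of \textbf{PC1}, the element $u$ becomes the parity operator of this new Hopf superalgebra via conjugation, and because both \textbf{PC1} and \textbf{PC2} leave the underlying associative algebra structure unchanged, the result is indeed a Hopf superalgebra on the same underlying algebra $\cH$. The principal obstacle I expect is purely bookkeeping: one must check that the two formulas from \eqref{eq:corresp}, when composed, assemble into a coassociative coproduct compatible with the new antipode and with the $\Z_2$-grading coming from $u$ rather than from $g$. However, this is guaranteed once each step is individually verified by the cited results; no fresh calculation is needed beyond tracking how the even/odd decompositions relative to $g$ and relative to $u$ interact, which is straightforward because $g$ and $u$ commute.

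Finally, for the tensor equivalence of representation categories, I would compose the two tensor equivalences supplied by \textbf{PC1} and \textbf{PC2}. Concretely, the representation category of the Hopf superalgebra $(\cH, \Delta, \epsilon, S)$ is tensor equivalent to that of the ordinary Hopf algebra $(\cH, \Delta_g, \epsilon, S_g)$ via \textbf{PC2}, which in turn is tensor equivalent to the representation category of the Hopf superalgebra $(\cH, (\Delta_g)_u, \epsilon, (S_g)_u)$ via \textbf{PC1}, in each case by \cite[Theorem 3.1.1]{AEG}. Each equivalence is the identity on underlying vector spaces and on morphisms; only the braiding used to interpret the coproduct on tensor products is changed. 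Consequently the composition is again the identity on objects, and so the resulting tensor equivalence is strict, as asserted.
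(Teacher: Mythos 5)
Your proposal is correct and follows essentially the same route as the paper: the paper likewise obtains Theorem \ref{thm:PC} by composing \textbf{PC2} (passing to the ordinary Hopf algebra $(\cH,\Delta_g,\epsilon,S_g)$) with \textbf{PC1} (passing back to a Hopf superalgebra with parity operator $u$), citing \cite[Theorem 3.1.1]{AEG} for each step, and deduces the tensor equivalence from the fact that the graded action of $\Delta(x)$ and the ordinary action of $\Delta_g(x)$ on $M\otimes N$ coincide. Your explicit check that $u$ remains group-like after \textbf{PC2} (since $u\in\cH_0$ forces $\Delta_g(u)=\Delta(u)=u\otimes u$) is a detail the paper leaves implicit, but it is exactly the right verification.
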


\subsubsection{Twisting the coalgebra structure}\label{sect:twisting}
\setcounter{case}{0}

Recall the following fact \cite{D2, R}. Let $(\cH, \Delta, \epsilon, S)$ be a Hopf superalgebra. Given an invertible even element $\cJ\in \cH\otimes\cH$ satisfying the conditions
\begin{eqnarray}\label{eq:twist}
\begin{aligned}
&(\Delta\otimes\id)(\cJ)(\cJ\otimes 1)=(\id\otimes\Delta)(\cJ)(1\otimes\cJ), \\
&(\epsilon\otimes\id)(\cJ)=(\id\otimes\epsilon)(\cJ)=1,
\end{aligned}
\end{eqnarray}
one can twist the coalgebra structure to obtain a new Hopf superalgebra $(\cH, \Delta^{\cJ}, \epsilon, S^{\cJ})$ with the same underlying associative superalgebraic structure on $\cH$.
The new comultiplication $\Delta^{\cJ}$ and antipode $S^{\cJ}$ are given by
\[
\Delta^\cJ(x)= \cJ^{-1}\Delta(x) \cJ, \quad S^\cJ(x)=\cG^{-1} S(x) \cG, \quad \forall x\in \cH,
\]
with $\cG=m\circ(S\otimes\id)(\cJ)$, where $m$ is the multiplication of $\cH$. The element $\cJ$ is called a {\em Drinfeld twist} for $\cH$. Note that twisting does not change the counit.

\subsection{Quantum correspondences}
 Keep the notation in Section \ref{sect:quantum}.
Let $\g$ be a Lie superalgebra or affine Lie superalgebra in Table \ref{table:classical} or Table \ref{table:affine} with a fundamental system $\Pi$.
Then there exists a corresponding $\g'$ such that $(\g, \g')$ is a pair in Theorem \ref{thm:main-quan}.  Now $\Pi'=\phi(\Pi)$ is a fundamental system of $\g'$.

Consider $\UU_q(\g, \Pi)$ as a Hopf superalgebra with the standard grading. As before, we denote its comultiplication, counit and antipode by $\Delta, \epsilon$ and $S$ respectively. Let
\begin{eqnarray}\label{eq:u1u2}
\begin{aligned}
&u_1:=\prod_{i\in\tau}(\tilde{\Phi}_1\tilde{\Phi}_{i+1}), \quad &u_2:=\tilde{\Phi}_1 u_1,
\end{aligned}
\end{eqnarray}
Note that $\tilde{\Phi}_1\tilde{\Phi}_{i+1} X^\pm_j (\tilde{\Phi}_1\tilde{\Phi}_{i+1})^{-1} = (-1)^{\delta_{i j}} X^\pm_j$ for all $i, j$, where $X^+_j=e_j$ and $X^-_j=f_j$. [Recall that $\tilde{\Phi}_k=1$ if $k>m+n$ by convention.] Thus
$u_1$ is the parity operator of $\UU_q(\g, \Pi)$.

Applying a picture change with respect to $u_1$ and $u_2$ to $(\UU_q(\g, \Pi), \Delta, \epsilon, S)$, we obtain the
Hopf superalgebra $(\UU_q(\g, \Pi), (\Delta_{u_1})_{u_2}, \epsilon, (S_{u_1})_{u_2})$
with parity operator $u_2$.
The new $\Z_2$-grading of $\UU_q(\g, \Pi)$, induced by $u_2$,  is given by
\begin{eqnarray}\label{eq:new-grade}
\begin{aligned}
&\UU_q(\g, \Pi)=\UU_q(\g, \Pi)'_0\oplus\UU_q(\g, \Pi)'_1  \quad \text{with}\\
&\UU_q(\g, \Pi)'_\theta=\left\{x\in \UU_q(\g, \Pi)\mid u_2 x u_2^{-1} = (-1)^\theta x\right\},
\quad \theta=0, 1.
\end{aligned}
\end{eqnarray}
Write $\tilde{\Delta}=(\Delta_{u_1})_{u_2}$ and $\tilde{S}=(S_{u_1})_{u_2}$, and use
$(\UU_q(\g, \Pi), \tilde{\Delta}, \epsilon, \tilde{S})$ to denote
this new Hopf superalgebra with the $\Z_2$-grading given by \eqref{eq:new-grade}.

Recall the elements $E_i, F_i, K_i^{\pm 1}$ of $\UU_q(\g, \Pi)$ introduced in Section \ref{sect:quantum}.
They together with the elements $\sigma_i$ generate $\UU_q(\g, \Pi)$. We have the following easy observation.
\begin{lemma} For any fixed $i$, the elements $E_i, F_i$ belong to $\UU_q(\g, \Pi)'_0$
(resp. $\UU_q(\g, \Pi)'_1$) if and only if $\phi(\alpha_i)$ is an even (resp. odd) simple root in $\Pi'$.
\end{lemma}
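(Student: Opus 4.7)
The plan is to compute $u_2 X u_2^{-1}$ for $X\in\{E_i,F_i\}$ directly from the definitions and match the resulting sign with the parity of $\phi(\alpha_i)$ read off from the tables in Section~\ref{sect:roots}.

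First I would observe that since $u_2$ is a product of elements $\sigma_k$ in the abelian group $\mathrm{G}$, it commutes with every group-algebra factor ($\Phi_{i+1}$, $\tilde\Phi_{i+2}$, $\tilde\Phi_i$, and so on) appearing in the definitions \eqref{eq:connect B} of $E_i$ and $F_i$, and likewise with the $\sigma$-factors in the affine-root formulas of Sections~\ref{sect:sl-osp} and \ref{sect:osp2}. Consequently $u_2E_iu_2^{-1}$ equals the $\sigma$-factor of $E_i$ multiplied by $u_2 e_i u_2^{-1}$, and analogously for $F_i$. The new $\Z_2$-degree of $E_i$ (resp.\ $F_i$) in the grading \eqref{eq:new-grade} is therefore determined solely by the scalar produced when conjugating $e_i$ (resp.\ $f_i$) by $u_2$.

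Next, the explicit $\mathrm{G}$-action \eqref{eq:sigma-act} gives $\sigma_k e_i\sigma_k^{-1}=(-1)^{(\alpha_k,\alpha_i)}e_i$ for $k\neq 0$ and $\sigma_0 e_i\sigma_0^{-1}=(-1)^{\delta_{0,i}}e_i$, with identical signs for $f_i$. Writing $u_2=\tilde\Phi_1^{1+|\tau|}\prod_{j\in\tau}\tilde\Phi_{j+1}$ and letting $m_k\in\{0,1\}$ denote the multiplicity modulo $2$ of $\sigma_k$ in this product, one obtains $u_2 e_i u_2^{-1}=(-1)^{\mu(i)}e_i$ and $u_2 f_i u_2^{-1}=(-1)^{\mu(i)}f_i$, with
\[
\mu(i)\;\equiv\;\sum_{k\neq 0} m_k\,(\alpha_k,\alpha_i)\;+\;m_0\,\delta_{0,i}\pmod{2}.
\]
The remaining task is to identify $\mu(i)$ with the parity $[\phi(\alpha_i)]$ computed via the map $\chi_{\g'}$ described in Section~\ref{sect:roots}.

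This identification is local, since $(\alpha_k,\alpha_i)=0$ unless $k=i$ or $k$ is adjacent to $i$ in the Dynkin diagram, and is carried out case by case using Tables~\ref{table:classical} and \ref{table:affine}. The key point is that $u_2=\tilde\Phi_1\prod_{j\in\tau}\tilde\Phi_1\tilde\Phi_{j+1}$ has been engineered precisely so that this matching holds: the product $\prod_{j\in\tau}\tilde\Phi_1\tilde\Phi_{j+1}$ contributes one sign per odd simple root of $\Pi$ that is suitably paired with $\alpha_i$, while the extra factor $\tilde\Phi_1$ corrects the parity discrepancy that $\phi$ produces at the tail node $\alpha_{m+n}$ (and, in the affine case, at $\alpha_0$). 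A direct tally will then show $\mu(i)=[\alpha_i]$ at the interior nodes where $[\phi(\alpha_i)]=[\alpha_i]$, and $\mu(i)=1+[\alpha_i]$ at the boundary nodes where $\phi$ swaps $\varepsilon$ with $\delta$ so that $[\phi(\alpha_i)]=1+[\alpha_i]$; see Tables~\ref{table:Dynkin diagram-B}--\ref{table:Dynkin diagram-osp2}. The main obstacle is therefore merely bookkeeping across the various diagram types and pairs in Theorem~\ref{thm:main-quan}; no input beyond the action \eqref{eq:sigma-act} is required.
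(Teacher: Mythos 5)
Your proposal is correct and follows the only natural route: since $\mathrm{G}$ is abelian, the $u_2$-degree of $E_i$ (resp.\ $F_i$) is governed entirely by the sign $u_2e_iu_2^{-1}=(-1)^{\mu(i)}e_i$, and the lemma reduces to checking $\mu(i)\equiv[\phi(\alpha_i)]\pmod 2$ against Tables \ref{table:classical} and \ref{table:affine}. The paper offers no proof at all (it calls the statement an ``easy observation''), so your write-up is already more explicit than the source; a marginally cleaner bookkeeping is to factor $u_2=\tilde\Phi_1 u_1$ and use that $u_1$ is the original parity operator, so that the claim becomes $\tilde\Phi_1 e_i\tilde\Phi_1^{-1}=(-1)^{[\alpha_i]+[\phi(\alpha_i)]}e_i$, i.e.\ conjugation by $\tilde\Phi_1$ flips the sign exactly at the nodes where $\phi$ changes parity. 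The only reservation is that your final case-by-case tally is asserted rather than carried out; it does check out (e.g.\ for an interior node $\alpha_i=\cE_i-\cE_{i+1}$ the relevant sum of inner products is $\pm1\pm1$, hence even, while at $\alpha_{m+n}$ and, where applicable, $\alpha_0$ one gets an odd contribution precisely when $\phi$ swaps the parity), so this is a matter of completeness rather than correctness.
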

This immediately implies
\begin{corollary} \label{cor:alg-iso} The associative algebra isomorphism $\UU_{-q}(\g', \Pi')\stackrel{\cong}{\longrightarrow} \UU_q(\g, \Pi)$ of Theorem \ref{thm:main-quan} defined by \eqref{eq:B-map}
is an isomorphism of superalgebras if
$\UU_q(\g, \Pi)$ is given the $\Z_2$-grading \eqref{eq:new-grade} induced by $u_2$, while $\UU_{-q}(\g', \Pi')$ has  the usual $\Z_2$-grading.
\end{corollary}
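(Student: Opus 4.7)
The plan is to reduce the corollary directly to the preceding lemma. By Theorem \ref{thm:main-quan} the map \eqref{eq:B-map} is already known to be an associative algebra isomorphism, so the only remaining task is to verify that it sends each homogeneous generator of $\UU_{-q}(\g',\Pi')$ (with the standard $\Z_2$-grading) to an element of $\UU_q(\g,\Pi)$ with the matching parity under the new $\Z_2$-grading \eqref{eq:new-grade} induced by $u_2$. Since parities on a generating set determine the grading of the whole algebra, this is enough.

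First I would handle the ``toral'' generators $k'_i$ and $\sigma'_i$, which are all even in $\UU_{-q}(\g',\Pi')$. Their images under \eqref{eq:B-map} are $K_i=\sigma_i k_i$ and $\sigma_i$. Because $u_2$ lies in the abelian group algebra $\C G$ (so commutes with every $\sigma_i$), and because the relations \eqref{eq:sigma-act} force each $\sigma_j$ to commute with each $k_i$, conjugation by $u_2$ fixes both $K_i$ and $\sigma_i$. Hence $K_i,\sigma_i\in \UU_q(\g,\Pi)'_0$, as required.

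For the generators $e'_i$ and $f'_i$, the standard grading on $\UU_{-q}(\g',\Pi')$ declares them odd precisely when $\alpha'_i=\phi(\alpha_i)$ is an odd simple root of $\Pi'$, i.e.\ when $i\in\tau'$. The preceding lemma states exactly that $E_i,F_i$ lie in $\UU_q(\g,\Pi)'_0$ or in $\UU_q(\g,\Pi)'_1$ according to the same dichotomy. Thus \eqref{eq:B-map} preserves parity on every generator, and since it is already multiplicative and bijective, it extends uniquely to a superalgebra isomorphism between $\UU_{-q}(\g',\Pi')$ and $\UU_q(\g,\Pi)$ equipped with the grading \eqref{eq:new-grade}.

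The only real content is the preceding lemma, which I would establish by direct computation of $u_2 E_i u_2^{-1}$ and $u_2 F_i u_2^{-1}$ using the action \eqref{eq:sigma-act} together with the explicit formula $u_2=\tilde{\Phi}_1\prod_{j\in\tau}(\tilde{\Phi}_1\tilde{\Phi}_{j+1})$. The expected obstacle is purely bookkeeping: the definition of $E_i, F_i$ in \eqref{eq:connect B} branches on whether $i\in\tau$; the index $i=0$ has several non-uniform prescriptions \eqref{eq:connect AB-case1}--\eqref{eq:connect D-case2} depending on the type of Dynkin diagram; and one must track carefully how $\phi$ interchanges $\varepsilon_i\leftrightarrow \delta'_i$ (which is precisely what can flip the parity of $\alpha'_i$ relative to $\alpha_i$ for the end-node simple roots). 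Once organized according to the tables of Section \ref{sect:quantum}, each case reduces to counting, modulo $2$, how many of the $\sigma_j$'s appearing in $u_2$ anticommute with the given $e_i$ or $f_i$, and comparing with the parity of $\phi(\alpha_i)$ read off from the corresponding Dynkin diagram in Tables \ref{table:Dynkin diagram-B}--\ref{table:Dynkin diagram-osp2}.
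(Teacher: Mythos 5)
Your proposal is correct and follows essentially the same route as the paper: the paper states the parity statement for $E_i,F_i$ as a lemma immediately before the corollary and then declares that the corollary ``immediately'' follows, which is precisely your reduction (even toral generators plus the lemma for $E_i,F_i$, then the fact that parity on generators determines the grading under an algebra isomorphism). Your additional sketch of how to verify the lemma by conjugating with $u_2$ and bookkeeping over the cases of \eqref{eq:connect B} and the Dynkin-diagram tables is consistent with what the paper leaves as an ``easy observation.''
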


Recall that $|\Pi|$ denotes the cardinality of $\Pi$. Define
\begin{eqnarray}\label{eq:J}
\cJ:=\frac{1}{2^{|\Pi|}}\cT, \quad \cT:=\cT^{(0)}\cT^{(1)},
\end{eqnarray}
where
\[
\begin{aligned}
\cT^{(0)}:=&\prod_{i\notin\tau}\cT^{(0)}_i, \quad \cT^{(1)}:=\prod_{i\in\tau}\cT^{(1)}_i, \\
\cT^{(0)}_i:=&(1+\tilde{\Phi}_1\tilde{\Phi}_{i+1})\otimes 1+(1-\tilde{\Phi}_1\tilde{\Phi}_{i+1})\otimes \Phi_{i+1},\quad i\notin\tau, \\
\cT^{(1)}_i:=&(1+\tilde{\Phi}_1\tilde{\Phi}_{i+1})\otimes 1+(1-\tilde{\Phi}_1\tilde{\Phi}_{i+1})\otimes \tilde{\Phi}_{i+2}, \quad i\in\tau.
\end{aligned}
\]
\begin{lemma}\label{lem:j-properties}
The element $\cJ$ defined by \eqref{eq:J} satisfies the relations
 \[
 \begin{aligned}
&(\tilde{\Delta}\otimes\id)(\cJ)(\cJ\otimes 1)=(\id\otimes\tilde{\Delta})(\cJ)(1\otimes\cJ), \\
&(\epsilon\otimes\id)(\cJ)=(\id\otimes\epsilon)(\cJ)=1.
\end{aligned}
\]
\end{lemma}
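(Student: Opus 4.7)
The plan is to exploit the fact that $\cJ$ lies entirely in the sub-Hopf-algebra $\C\mathrm{G}\otimes \C\mathrm{G}$, where $\mathrm{G}$ is an abelian group, and therefore the verification of both axioms reduces to a commutative computation that can be carried out factor-by-factor in $\cT$.

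First I would check that $\tilde{\Delta}$ agrees with the original $\Delta$ on $\C\mathrm{G}$. Each $\sigma_j$ is even with respect to the standard $\Z_2$-grading and commutes with both $u_1$ and $u_2$, so it remains even with respect to the new $\Z_2$-grading \eqref{eq:new-grade}. Since $\Delta(\sigma_j)=\sigma_j\otimes\sigma_j$ already lies in the even-by-even part, the successive picture changes PC2 (with $u_1$) and PC1 (with $u_2$) leave it unchanged. Hence $\tilde{\Delta}(\sigma_j)=\sigma_j\otimes\sigma_j$ for every $j$, and more generally $\tilde{\Delta}|_{\C\mathrm{G}}$ is the standard group-like comultiplication. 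In particular $\tilde{\Delta}(\tilde\Phi_1\tilde\Phi_{i+1})$, $\tilde{\Delta}(\Phi_{i+1})$ and $\tilde{\Delta}(\tilde\Phi_{i+2})$ are all diagonal.

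Next, the counit axiom is immediate: $\epsilon(\sigma_j)=1$ forces $\epsilon(1\pm\tilde\Phi_1\tilde\Phi_{i+1})=1\pm 1$, so $(\epsilon\otimes\id)(\cT^{(0)}_i)=2\cdot 1 + 0\cdot \Phi_{i+1}=2$, and likewise $(\epsilon\otimes\id)(\cT^{(1)}_i)=2$, $(\id\otimes\epsilon)(\cT^{(\bullet)}_i)=2$. Taking the product over all $i$ contributes $2^{|\Pi|}$, which cancels the normalising factor in \eqref{eq:J} and yields $(\epsilon\otimes\id)(\cJ)=(\id\otimes\epsilon)(\cJ)=1$.

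For the cocycle identity, the essential point is that every element appearing in $\cT$ belongs to $\C\mathrm{G}\otimes\C\mathrm{G}$, so $(\tilde\Delta\otimes\id)(\cT)$, $(\id\otimes\tilde\Delta)(\cT)$, and the various $\cT\otimes 1$, $1\otimes\cT$ all live in the commutative algebra $\C\mathrm{G}^{\otimes 3}$. Because the factors $\cT^{(0)}_i$ and $\cT^{(1)}_j$ pairwise commute and $(\tilde\Delta\otimes\id)$, $(\id\otimes\tilde\Delta)$ are algebra homomorphisms, the cocycle identity for $\cT$ is equivalent to the same identity for each individual factor. So it suffices to check, for $A:=\tilde\Phi_1\tilde\Phi_{i+1}$ and $B$ equal to either $\Phi_{i+1}$ or $\tilde\Phi_{i+2}$, that the element $\cT_i:=(1+A)\otimes 1+(1-A)\otimes B$ satisfies
\[
(\tilde\Delta\otimes\id)(\cT_i)(\cT_i\otimes 1)=(\id\otimes\tilde\Delta)(\cT_i)(1\otimes\cT_i).
\]
A direct expansion using $\tilde\Delta(A)=A\otimes A$, $\tilde\Delta(B)=B\otimes B$, $A^2=B^2=1$, $[A,B]=0$, and the fact that $A(1\pm A)=\pm(1\pm A)$, reduces both sides to the common four-term sum
\[
(1+A)\otimes(1+A)\otimes 1+(1+A)\otimes(1-A)\otimes B+(1-A)\otimes(1+A)B\otimes B+(1-A)\otimes(1-A)B\otimes 1,
\]
where on the right-hand side one uses $B^2=1$ to rewrite the last term.

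The only non-trivial step is the first one, namely confirming that $\tilde\Delta$ reduces to the group-like coproduct on $\C\mathrm{G}$; once this is in hand the entire proof is a routine calculation in a commutative finite group algebra. No Serre relations, quantum commutators, or signs arising from the $\Z_2$-grading are ever invoked, which explains why this lemma holds uniformly across all pairs $(\g,\g')$ of Theorem \ref{thm:main-quan}.
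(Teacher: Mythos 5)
Your proposal is correct and follows essentially the same route as the paper: observe that $\cJ$ lives in $\C\mathrm{G}\otimes\C\mathrm{G}$ where $\tilde\Delta$ coincides with the group-like coproduct $\Delta$, verify the counit condition factor-by-factor (each factor contributing $2$, cancelling the $2^{-|\Pi|}$), and reduce the cocycle identity to a direct expansion for a single commuting factor $(1+A)\otimes 1+(1-A)\otimes B$ with $A^2=B^2=1$. The only difference is that you write out explicitly the four-term expansion that the paper leaves as "direct computations."
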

\begin{proof}
The second relation is clear since $\epsilon(\Phi_i)=\epsilon(\tilde\Phi_i)=1$ for all $i$.

To prove the first relation, note that $\cJ$, $u_1$ and $u_2$ involve only the even elements $\sigma_i$ of $\UU_q(\g, \Pi)$, which commute  among themselves.
Thus the first relation is equivalent to that obtained by replacing $\tilde{\Delta}$ by $\Delta$.

For any elements $\sigma,\sigma'$ in $\mathrm{G}$, denote $x=(1+\sigma)\otimes 1+(1-\sigma)\otimes \sigma'$. It can be proven by direct computations that
$(\Delta\otimes\id)(x)(x\otimes 1)=(\id\otimes\Delta)(x)(1\otimes x)$, and hence $(\tilde{\Delta}\otimes\id)(x)(x\otimes 1)=(\id\otimes\tilde{\Delta})(x)(1\otimes x)$. As $\cT$ is the product of elements of the form $x$, this immediately leads to the first relation.
\end{proof}
\begin{remark}
We have $\cJ^{-1}=\cJ$ because $x^2=4$ for the $x$ in the proof of the above lemma.
\end{remark}

By Lemma \ref{lem:j-properties}, we can twist the Hopf superalgebra $(\UU_q(\g, \Pi), \tilde{\Delta}, \epsilon, \tilde{S})$ using the element $\cJ$ given in \eqref{eq:J} to obtain a new Hopf superalgebra $(\UU_q(\g, \Pi), \tilde{\Delta}^\cJ, \epsilon, \tilde{S}^\cJ)$. We emphasize that the $\Z_2$-grading is given by \eqref{eq:new-grade}.

\begin{lemma}\label{lem:tilde-Delta}
The comultiplication, cunit and antipode of the Hopf superalebra $(\UU_q(\g, \Pi), \tilde{\Delta}^\cJ, \epsilon, \tilde{S}^\cJ)$ are given by
\[
\begin{aligned}
&\tilde{\Delta}^\cJ(\sigma_i)=\sigma_i\otimes\sigma_i, \quad \tilde{\Delta}^\cJ(K_i)=K_i\otimes K_i, \\
&\tilde{\Delta}^\cJ(E_i)=E_i\otimes 1 + K_i  \otimes E_i, \quad \tilde{\Delta}^\cJ(F_i)=F_i\otimes K^{-1}_i + 1 \otimes F_i, \\
&\epsilon(E_i)=0, \quad \epsilon(F_i)=0, \quad \epsilon(K_i)=1,   \quad \epsilon(\sigma_i)=1\\
&\tilde{S}^\cJ(E_i)=- K^{-1}_i E_i, \quad \tilde{S}^\cJ(F_i)=- F_i K_i , \quad \tilde{S}^\cJ(K_i)=K^{-1}_i, \quad \tilde{S}^\cJ(\sigma_i)=\sigma^{-1}_i, \quad \forall i.
\end{aligned}
\]
\end{lemma}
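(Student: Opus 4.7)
My plan is to compute $\tilde\Delta^\cJ$ and $\tilde S^\cJ$ on each generator by unfolding the two-stage construction: first apply the picture change to obtain $\tilde\Delta$ and $\tilde S$ on the original generators $e_i, f_i, k_i, \sigma_i$, then conjugate by $\cJ$ and by $\cG = m\circ(\tilde S\otimes\id)(\cJ)$ to obtain the twisted versions on $E_i, F_i, K_i$. The whole calculation is made tractable by the observation that $\cJ$, $\cG$, $u_1$, $u_2$, all the $\Phi_j, \tilde{\Phi}_j$, and every $\sigma_i, k_i, K_i$ lie inside the commutative subalgebra generated by $\mathrm{G}$ and the $k_j^{\pm 1}$, so that conjugation by any of them only produces signs on the non-commutative generators $e_i, f_i$.

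First, since $\sigma_i$ and $K_i$ are group-like with respect to every copy of $\Delta$, $\Delta_{u_1}$, $\tilde\Delta$, and since $\cJ$ commutes with them in both tensor factors, their coproducts pass unchanged through every stage, yielding $\tilde\Delta^\cJ(\sigma_i)=\sigma_i\otimes\sigma_i$ and $\tilde\Delta^\cJ(K_i)=K_i\otimes K_i$ immediately. For $e_i$ I split $\Delta(e_i)=e_i\otimes 1+k_i\otimes e_i$ according to the parity of the second tensor factor under $u_1$, then under $u_2$; applying \textbf{PC2} and \textbf{PC1} in turn yields
\begin{equation*}
\tilde\Delta(e_i) \;=\; e_i\otimes 1 + k_i\, w_i^{(e)}\otimes e_i,
\end{equation*}
where $w_i^{(e)}\in\C\mathrm{G}$ is an explicit product $u_1^{a_i}u_2^{b_i}$ determined by whether $i\in\tau$ and whether $\phi(\alpha_i)$ is odd. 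An analogous formula holds for $f_i$. Using $\tilde\Delta(\Phi_{i+1})=\Phi_{i+1}\otimes\Phi_{i+1}$ (and similarly for $\tilde{\Phi}_{i+2}$), I then compute
\begin{equation*}
\tilde\Delta^\cJ(E_i) \;=\; \cJ^{-1}(\Phi_{i+1}\otimes\Phi_{i+1})\bigl(e_i\otimes 1 + k_i w_i^{(e)}\otimes e_i\bigr)\cJ
\end{equation*}
when $i\notin\tau$, with the evident modification when $i\in\tau$. The factors $\cT_j^{(0)}$ and $\cT_j^{(1)}$ in $\cJ$ are sums of orthogonal idempotents $\tfrac{1}{2}(1\pm\tilde{\Phi}_1\tilde{\Phi}_{j+1})$ in the first tensor factor coupled to $\Phi_{j+1}$ or $\tilde{\Phi}_{j+2}$ in the second; conjugating $e_i$ by these characters in the second factor cancels the $w_i^{(e)}$ in the first factor and absorbs precisely one $\Phi_{i+1}$ (respectively $\tilde{\Phi}_{i+2}$), producing $K_i\otimes E_i$. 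Meanwhile $E_i\otimes 1$ survives because $\cJ$ acts trivially on $1$ in the second factor. The calculation for $F_i$ is dual, and the counit formulas follow since neither the picture change nor the twist alters $\epsilon$.

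For the antipode, I first compute $\cG$ explicitly: since $\cJ$ lies in the commutative group-algebra part and $\tilde S$ agrees with the standard antipode on $\mathrm{G}$ and on the $k_j$, the evaluation $\cG=m\circ(\tilde S\otimes\id)(\cJ)$ gives a product of elements in $\mathrm{G}\cdot\langle k_j^{\pm 1}\rangle$, computable factor by factor from the formulas for $\cT_j^{(0)}$ and $\cT_j^{(1)}$. Then $\tilde S^\cJ(x)=\cG^{-1}\tilde S(x)\cG$ is evaluated on each generator; for $E_i$ the $\Phi$-factor in $E_i$ combines with the $u_1,u_2$-signs arising in $\tilde S(e_i)$ and the signs from $\cG$-conjugation to produce $\tilde S^\cJ(E_i)=-K_i^{-1}E_i$, and similarly for $F_i$. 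The group-like formulas $\tilde S^\cJ(K_i)=K_i^{-1}$ and $\tilde S^\cJ(\sigma_i)=\sigma_i^{-1}$ are immediate.

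The main obstacle is the sign bookkeeping in the second paragraph: the exponents in $w_i^{(e)}$, the parity of $e_i$ under $u_2$, and the precise composition of idempotents inside $\cJ$ must all match for the cancellation producing $E_i\otimes 1+K_i\otimes E_i$ to go through. Because the form of $\cJ$ depends on $\tau$ through the $\cT_j^{(0)}$ versus $\cT_j^{(1)}$ distinction, and because different fundamental systems in Tables \ref{table:Dynkin diagram-B}, \ref{table:Dynkin diagram-sl2}, \ref{table:Dynkin diagram-osp2} produce different $\tau$, the proof is best organised as a single generic calculation keyed to the pair $([i], [\phi(\alpha_i)])\in\{0,1\}^2$, so that the ostensible case analysis collapses to four uniform identities.
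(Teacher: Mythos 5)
Your overall strategy coincides with the paper's: push the coproduct through the two picture changes, use that $\cJ$, $u_1$, $u_2$ and all the $\Phi_j,\tilde\Phi_j$ live in the commutative group-algebra part so that $\sigma_i$ and $K_i$ stay group-like, and then conjugate by $\cJ$ to fix the coproducts of $E_i,F_i$. However, one step of your sketch is wrong as stated and would derail the computation. After multiplying by $\tilde\Delta(\Phi_{i+1})=\Phi_{i+1}\otimes\Phi_{i+1}$, the first summand of $\tilde\Delta(E_i)$ is $E_i\otimes\Phi_{i+1}$ (resp.\ $E_i\otimes\tilde\Phi_{i+2}$ for $i\in\tau$), not $E_i\otimes 1$; this is exactly what the paper records, e.g.\ $\tilde\Delta(E_i)=E_i\otimes\Phi_{i+1}+\Phi_iK_i\otimes E_i$ for $m+n\neq i\notin\tau$. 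This term does not ``survive because $\cJ$ acts trivially on $1$ in the second factor'': the twist acts nontrivially on it. Since $\tilde\Phi_1\tilde\Phi_{j+1}\,E_i\,(\tilde\Phi_1\tilde\Phi_{j+1})^{-1}=(-1)^{\delta_{ij}}E_i$, conjugating $E_i$ in the \emph{first} tensor factor by the idempotents $\tfrac12(1\pm\tilde\Phi_1\tilde\Phi_{j+1})$ flips the $j=i$ eigenvalue, and the resulting product of the two slot-$i$ group elements in the second factor is precisely the $\Phi_{i+1}$ (or $\tilde\Phi_{i+2}$) needed to cancel the one already sitting there. Taken literally, your argument stops at $E_i\otimes\Phi_{i+1}+K_i\otimes E_i$, which is not the assertion of the lemma.

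A second, smaller point: the proposed reduction to four uniform identities keyed to $([i],[\phi(\alpha_i)])$ does not cover $i=0$ in the affine cases. There $E_0,F_0,K_0$ are not given by the generic formula \eqref{eq:connect B} but by \eqref{eq:connect AB-case1}, \eqref{eq:connect AB-case3}, \eqref{eq:connect AB-case4}, \eqref{eq:connect D-case1}, \eqref{eq:connect D-case2}, with prefactors such as $\tilde\Phi_2\prod_{j\in\tau}(\tilde\Phi_1\tilde\Phi_{j+1})$, while the slot-$0$ factor of $\cJ$ is trivial because $\tilde\Phi_1\tilde\Phi_1=1$; the paper accordingly lists $\tilde\Delta(E_0)$ separately for each diagram type before twisting. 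Your more computational derivation of $\tilde S^\cJ$ via $\cG=m\circ(\tilde S\otimes\id)(\cJ)$ is legitimate but unnecessary: the paper simply notes that the antipode is determined by the coproduct and counit.
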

\begin{proof} The relations for the counit are clear,
and the antipode relations can be easily obtained from the comultiplication
and the counit. Note that  $\cJ$ depends only on $\sigma_i$.
Since $K_i$ and $\sigma_i$ are all even and
commute among themselves, we immediately have
\[
\begin{aligned}
&\tilde{\Delta}^\cJ(\sigma_i)=\tilde{\Delta}(\sigma_i)=\Delta(\sigma_i)=\sigma_i\otimes\sigma_i, \quad
&\tilde{\Delta}^\cJ(K_i)=\tilde{\Delta}(K_i)=\Delta(K_i)=K_i\otimes K_i.
\end{aligned}
\]
Thus what remains to be proven are the formulae for $\tilde{\Delta}^\cJ(E_i)$
and $\tilde{\Delta}^\cJ(F_i)$. In the Hopf superalgebra $(\UU_q(\g,\Pi), \td,\ve,\ts)$,  we have, for $i,j>0$,
\[\begin{aligned}
&\td(E_i)=E_i\otimes\Phi_{i+1}+\Phi_iK_i\otimes E_i, \quad m+n\ne i\notin\tau,\\
&\td(E_j)=E_j\otimes\tilde{\Phi}_{j+2}+\tilde{\Phi}_1\tilde{\Phi}_jK_j\otimes E_j,\quad m+n\ne j\in\tau\\
&\td(E_{m+n})=E_{m+n}\otimes 1+u\Phi_{m+n}K_{m+n}\otimes E_{m+n},
\end{aligned}\]
where  $u=\tilde{\Phi}_1\prod_{j\in\tau,j\neq m+n}(\tilde{\Phi}_1\tilde{\Phi}_{j+1})$, which is $u_1$ if $m+n\in\tau$, and is $u_2$ if $m+n\notin\tau$, and
\begin{itemize}
\item[(\romannumeral1)]
for the Dynkin diagrams of Types (1), (2) and (3)  in Table \ref{table:Dynkin diagram-sl2},
\[\begin{aligned}
&\quad\td(E_0)=E_0\otimes \Phi_2+\Phi_1K_0\otimes E_0;
\end{aligned}\]
\item[(\romannumeral2)]for type (4) Dynkin diagrams in Table \ref{table:Dynkin diagram-sl2},
\[\begin{aligned}
&\td(E_0)=E_0\otimes \tilde{\Phi}_3+ K_0\otimes E_0;
\end{aligned}\]
\item[(\romannumeral3)]for all  Dynkin diagrams in Table \ref{table:Dynkin diagram-osp2},
\[\begin{aligned}
&\td(E_0)=E_0\otimes \tilde{\Phi}_2\prod_{j\in\tau}(\tilde{\Phi}_1\tilde{\Phi}_{j+1})+K_0\otimes E_0,
\end{aligned}\]
\end{itemize}
Using the above formulae, we can easily show that
\[\begin{aligned}
\td^{\cJ}(E_i)=E_i\otimes 1+ K_i\otimes E_i, \quad \forall i.
\end{aligned}\]
We can similarly prove the formula for $\td^{\cJ}(F_i)$.
\end{proof}

\begin{proof}[Proof of Theorem \ref{them:hopf-connect}]
By Corollary \ref{cor:alg-iso}, the map \eqref{eq:B-map}
is an isomorphism of associative superalgebras, and by Lemma \ref{lem:tilde-Delta}, it is a Hopf superalgebra map. Hence follows the theorem.
\end{proof}

\begin{proof}[Proof of Theorem \ref{thm:tensor-equiv}]
This easily follows from Theorem \ref{them:hopf-connect} by using
Theorem \ref{thm:PC}.
\end{proof}

\begin{remark}
We expect that for any pair $(\g, \g')$ in Theorem  \ref{thm:main-quan}, the
representation categories of $\UU_q(\g, \Pi)$ and of $\UU_{-q}(\g', \Pi')$ are
equivalent as braided strict tensor categories. One should be able to prove
this following \cite[Chapter 10.1]{Ma}.
\end{remark}

\begin{remark} Another possible approach to the proof of
Theorem \ref{them:hopf-connect} is to consider Hopf superalgebras in the category of Yetter-Drinfeld modules. The Hopf superalgebras $\UU_{q}(\g, \Pi)$ and $\UU_{-q}(\g', \Pi')$ are then expected to be quantum doubles of the same Nichols algebra of diagonal type \cite{H}. However, such a proof will necessarily be much more involved.
\end{remark}

\section*{Acknowledgements}
This research was supported by National Natural Science Foundation of China Grants No. 11301130,  No. 11431010;
and Australian Research Council Discovery-Project Grant DP140103239.
Xu wishes to thank the School of Mathematics and Statistics at the University of Sydney for its hospitality.


\begin{thebibliography}{9999}

\bibitem{AEG} Andruskiewitsch, N.; Etingof, P.; Gelaki, S., Triangular Hopf algebras with the Chevalley property. {\sl Michigan Math. J. \bf 49} (2001), 277-298.

\bibitem{BGZ}  Bracken, A. J.; Gould, M. D.; Zhang, R. B.,
Quantum supergroups and solutions of the Yang-Baxter equation.
{\sl Modern Phys. Lett. A. \bf 5} (1990), no. 11, 831--840.

%\bibitem{D1} Drinfeld, V. G., Quantum Group. {\sl In Proceeding of the International
 %Congress of Mathematicans, Berkeley} (1986), Vol.1; {\sl Amer. Math. Soc.} (1987),
  %798-820.

 \bibitem{D2} Drinfeld, V. G.,  Quasi-Hopf algebras.
 {\sl Leningrad Math. J. \bf 1} (1990), no. 6, 1419--1457.

\bibitem{GL} Gorelik, M.; Lanzmann, E.,
The annihilation theorem for the completely reducible Lie superalgebras.
{\sl Invent. Math. \bf 137} (1999), no. 3, 651--680.

%\bibitem{H1}  Heckenberger, I. Classification of arithmetic root systems.
%{\sl Adv. Math. \bf 220} (2009), no. 1, 59--124.

\bibitem{H} Heckenberger, I.,  Lusztig isomorphisms for Drinfel'd doubles of bosonizations of Nichols algebras of diagonal type. {\sl J. Alg. \bf 323} (2010), 2130--2180.

%\bibitem{J}Jimbo, M., A q-difference analogue of $\U (\g)$ and the Yang-Baxter %equation. {\sl Lett. Math. Phys. \bf 10} (1985), 63-69.

\bibitem{K1}Kac, V.G., Lie superalgebras. {\sl  Adv. Math. \bf 26} (1977), 8-96.

\bibitem{K2}Kac, V.G., Infinite-dimensional Lie algebras. Third edition. Cambridge University Press, Cambridge, 1990.

\bibitem{LE}Lanzmann, E., The Z transformation and $U_q (osp (1, 2l))$-Verma modules annihilators.  {\sl Alge.  Rep. Theory  {\bf 5}(3)} (2002), 235-258.

\bibitem{Ma} Majid, S., Foundations of Quantum GroupTheory.  Cambridge University Press, 1995.

\bibitem{MW}  Mikhaylov, V.; Witten, E.,
 Branes and supergroups. {\sl Comm. Math. Phys. \bf 340} (2015), 699--832.

\bibitem{R} Reshetikhin, N. Multiparameter quantum groups and twisted quasitriangular Hopf algebras. {\sl Lett. Math. Phys. \bf 20} (1990), no. 4, 331--335.

\bibitem{RS} Rittenberg, V.; Scheunert, M., A remarkable connection between the representations of the Lie superalgebras $\osp(1,2n)$ and the Lie algebras $o(2n+1)$. {\sl Comm. Math. Phys. \bf 83} (1982), no. 1, 1--9.

\bibitem{JWV} van de Leur,  J.W., Contragredient Lie superalgebras of finite growth. Utrecht thesis (1985).

\bibitem{XZ} Xu, Y,; Zhang, R. B. Drinfeld realisations of quantum affine superalgebras.  arXiv:1611.06449.

\bibitem{Y}  Yamane, H., Universal $R$-matrices for quantum groups associated to simple Lie superalgebras. {\sl Proc. Japan Academy, Series A, Math Sciences  {\bf 67}(4)}  (1991), 108-112.

\bibitem{Y1}  Yamane, H., On defining relations of affine Lie superalgebras and affine quantized universal enveloping superalgebras. {\sl Publ. Res. Inst. Math. Sci. \bf 35} (1999), no. 3, 321--390.

\bibitem{Z3} Zhang, R. B., Finite-dimensional representations of $\U_q (osp (1/2n))$ and its connection with quantum $so (2n+ 1)$. {\sl Lett. Math. Physics, \bf 25}  (1992),  317-325.

\bibitem{Z93} Zhang, R. B.,
Finite dimensional irreducible representations of the quantum supergroup $\Uq (gl (m/n))$.
{\sl J. Math Phys. \bf 34} (1993), 1236-1254.

\bibitem{Z2} Zhang, R. B., Symmetrizable quantum affine superalgebras and their representations.  {\sl J. Math Phys. \bf 38} (1997),  535--543.

\bibitem{Z98} Zhang, R. B., Structure and representations of the quantum general linear supergroup. {\sl Comm. Math. Phys. \bf 195} (1998), no. 3, 525--547.

\bibitem{Z1} Zhang, R.B., Serre presentations of Lie superalgebras. In {\em Advances in Lie Superalgebras}, 235--280, Springer INdAM Ser., 7, Springer, Cham, 2014.

\bibitem{ZGB}Zhang, R. B., M D Gould and A J Bracken,
Solutions of the graded classical Yang-Baxter equation and integrable models.
{\sl J. Phys. A: Math. Gen. \bf 24} (1991), 1185--1197.

\end{thebibliography}
\end{document}